\documentclass[11pt]{article}
\pagestyle{plain}
\usepackage{amssymb}
\usepackage{amsmath}
\usepackage{amsthm}
\usepackage{url}
\usepackage{pslatex}
\usepackage{subfigure}
\usepackage{times}
\usepackage[pdftex]{graphicx}
\usepackage{multirow}

\addtolength{\textwidth}{1.5cm}
\addtolength{\oddsidemargin}{-0.5cm}
\addtolength{\textheight}{0.5cm}

\newtheorem{thm}{Theorem}
\newtheorem{lemma}[thm]{Lemma}
\newtheorem{prop}[thm]{Proposition}
\newtheorem{cor}[thm]{Corollary}

\theoremstyle{definition}

\renewcommand{\emptyset}{\varnothing}

\newcommand{\bb}{\mathbf{b}}
\newcommand{\uu}{\mathbf{u}}
\newcommand{\vv}{\mathbf{v}}

\newcommand{\RR}{\mathbb{R}}

\DeclareMathOperator{\rank}{rank}
\DeclareMathOperator{\detnum}{Det}
\DeclareMathOperator{\Aut}{Aut}
\DeclareMathOperator{\Sym}{Sym}
\DeclareMathOperator{\STS}{STS}

\begin{document}

\title{Resolving sets for Johnson and Kneser graphs\thanks{Partially supported by the ESF EUROCORES programme EuroGIGA  ComPoSe IP04  MICINN Project EUI-EURC-2011-4306 and projects MTM2008-05866-C03-01, JA-FQM164 and JA-FQM305.  R.~F.~Bailey acknowledges support from a PIMS Postdoctoral Fellowship and the University of Regina Research Trust Fund.  K.~Meagher acknowledges support from an NSERC Discovery Grant.}}

\author{Robert F.~Bailey\footnotemark[1] \and Jos\'e C\'aceres\footnotemark[2] \and Delia Garijo\footnotemark[3] \and Antonio Gonz\'alez\footnotemark[3] \and Alberto M\'arquez\footnotemark[3] \and Karen Meagher\footnotemark[4] \and Mar\'{\i}a Luz Puertas\footnotemark[2]}

\maketitle

\footnotetext[1]{Corresponding author.  Present address: Department of Mathematics, Ryerson University, 350 Victoria St., Toronto, Ontario M5B 2K3, Canada.  E-mail: \texttt{robert.bailey@ryerson.ca}}
\footnotetext[2]{Departamento de Estad\'{\i}stica y Matem\'atica Aplicada, Universidad de Almer\'{\i}a, Ctra.\ Sacramento s/n, Almer\'{\i}a 04120, Spain. E-mail:\texttt{\{jcaceres, mpuertas\}@ual.es}}
\footnotetext[3]{Departamento de Matem\'atica Aplicada I, Universidad de Sevilla, Av.\ Reina Mercedes s/n, Sevilla 41012, Spain. E-mail: \texttt{\{dgarijo, gonzalezh, almar\}@us.es}}
\footnotetext[4]{Department of Mathematics and Statistics, University of Regina, 3737 Wascana Parkway, Regina, Saskatchewan S4S 0A2, Canada.  E-mail: \texttt{karen.meagher@uregina.ca}}

\begin{abstract} 
A set of vertices $S$ in a graph $G$ is a {\em resolving set} for $G$ if, for any two vertices $u,v$, there exists $x\in S$ such that the distances $d(u,x) \neq d(v,x)$.  In this paper, we consider the Johnson graphs $J(n,k)$ and Kneser graphs $K(n,k)$, and obtain various constructions of resolving sets for these graphs.  As well as general constructions, we show that various interesting combinatorial objects can be used to obtain resolving sets in these graphs, including (for Johnson graphs) projective planes and symmetric designs, as well as (for Kneser graphs) partial geometries, Hadamard matrices, Steiner systems and toroidal grids.
\end{abstract}

\section{Introduction and preliminaries}
\label{sec:Introduction}
In this paper, we consider graphs $G=(V(G),E(G))$ that are finite, simple and connected. As usual, the distance between two vertices $u$ and $v$ is denoted by $d_G(u,v)$, or simply $d(u,v)$ if the graph $G$ is clear. A vertex $x\in V(G)$ is said to \emph{resolve} a pair $u, v\in V(G)$ if $d_G(u,x)\neq d_G(v,x)$. A set $S\subseteq V(G)$ is a \emph{resolving set} for $G$ if any pair of vertices of $G$ can be resolved by some vertex in $S$. If the set $S$ is as small as possible, then it is called a \emph{metric basis} and its cardinality $\beta(G)$ is the \emph{metric dimension} of the graph $G$.

Metric bases and resolving sets were first introduced to the graph theory literature in the 1970s by Slater~\cite{slater} and independently by Harary and Melter~\cite{hararymelter}. 
(However, the definition of a metric basis for an arbitrary metric space was known in the geometry literature at least 20 years earlier: see Blumenthal \cite[Definition 39.1]{Blumenthal}, for instance.)
In his seminal paper, Slater mentioned the following potential application: a moving point in a graph may be located by finding the distances from the point to a collection of sonar or LORAN stations which have been judiciously positioned in the graph.  

Subsequently, many other applications of resolving sets and metric dimension have appeared in the literature.  For example, the study of resolvability in hypercubes is closely related to a coin-weighing problem (see~\cite{sebotannier} for details); strategies for the {\em Mastermind} game use resolving sets in Hamming graphs~\cite{chvatal}; resolving sets in triangular, rectangular and hexagonal grids have been proposed to study digital images~\cite{digital}; a method based on resolving sets for differentiating substances with the same chemical formula is given in~\cite{chaerojohnoeller}.  Mathematical applications of closely-related parameters were given by Babai in the study of the graph isomorphism problem~\cite{Babai80} and in obtaining bounds on the possible orders of primitive permutation groups~\cite{Babai81} (see also~\cite{bsmd}).

Since the problem of computing the metric dimension of a graph is NP-complete (see~\cite{landmarks}), many efforts have been focused on finding either exact values or good bounds for the metric dimension of certain classes of graphs.  Examples include trees~\cite{hararymelter,slater}, wheels~\cite{wheels}, unicyclic graphs~\cite{poissonzhang}, Cayley digraphs~\cite{cayley} and cartesian products~\cite{doubleresolving}, among others. 

A lower bound on the metric dimension of a graph $G$ can be obtained by considering its automorphism group $\Aut(G)$.  A {\em base} for a group acting on a set is a collection of points, chosen so that the only group element fixing all of those points is the identity element; equivalently, every group element is uniquely specified by its action on those points.  (See~\cite{Cameron99} for more background on bases.)  Recently, in the case where the group is the automorphism group of a graph $G$, bases have been referred to as {\em determining sets} for $G$, and the least cardinality of a base for $\Aut(G)$ has become known as the {\em determining number} of $G$, denoted $\detnum(G)$ (see~\cite{Boutin06}).  It is straightforward to verify the following result (see, for instance, \mbox{\cite[Proposition 3.8]{bsmd}}).

\begin{prop} \label{prop:detmet}
For any finite, connected graph~$G$, we have $\detnum(G)\leq \beta(G)$.
\end{prop}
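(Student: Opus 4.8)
The plan is to show that \emph{every} resolving set for $G$ is already a base for $\Aut(G)$ (a determining set, in the terminology above). Since a metric basis is in particular a resolving set, applying this to a metric basis of size $\beta(G)$ produces a base of that size, and hence $\detnum(G) \leq \beta(G)$.

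First I would record the elementary fact that every $\sigma \in \Aut(G)$ is an isometry of the metric space $(V(G), d_G)$: because $\sigma$ preserves adjacency and $G$ is connected, a shortest $u$–$v$ path is carried to a walk of the same length between $\sigma(u)$ and $\sigma(v)$, and applying the same argument to $\sigma^{-1}$ gives $d_G(\sigma(u),\sigma(v)) = d_G(u,v)$ for all $u,v \in V(G)$.

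Next, let $S = \{x_1,\dots,x_m\}$ be a resolving set for $G$ and suppose $\sigma \in \Aut(G)$ fixes every element of $S$ pointwise. For an arbitrary vertex $v$ and each $x_i \in S$,
\[
d_G(\sigma(v),x_i) = d_G(\sigma(v),\sigma(x_i)) = d_G(v,x_i),
\]
so $v$ and $\sigma(v)$ have the same tuple of distances to the vertices of $S$. Since $S$ is resolving, no two distinct vertices share such a tuple, forcing $\sigma(v) = v$; as $v$ was arbitrary, $\sigma$ is the identity. Thus the only automorphism fixing $S$ pointwise is the identity, i.e.\ $S$ is a base for $\Aut(G)$, so $\detnum(G) \leq |S|$. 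Taking $S$ to be a metric basis gives $\detnum(G) \leq \beta(G)$.

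There is essentially no obstacle here: the argument is a direct unwinding of the two definitions once one observes that automorphisms are distance-preserving. The only point needing (minor) care is exactly that isometry claim, which is where connectedness of $G$ is used, ensuring all distances are finite and the distance tuples compared above are well defined.
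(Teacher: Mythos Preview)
Your argument is correct and is the standard one. The paper itself does not give a proof of this proposition: it simply states that the result is ``straightforward to verify'' and cites \cite[Proposition~3.8]{bsmd}. What you have written is precisely the argument one finds there --- every resolving set is a base for $\Aut(G)$ because automorphisms are isometries --- so there is nothing to compare.
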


We refer the reader to~\cite{bsmd, determining} for further information on the relationship between the two parameters.

In this paper, we are interested in the metric dimension of Johnson and Kneser graphs, which we now introduce.

\subsection{Johnson and Kneser graphs}

The \emph{Kneser graph} $K(n,k)$ (where $n>k$) has the collection $\binom{[n]}{k}$ of all $k$-subsets of the $n$-set $[n]=\{1,\ldots, n\}$ as vertices, and edges connecting disjoint subsets.  As an example, the Petersen graph is the Kneser graph $K(5,2)$. Like Kneser graphs, the vertices of the \emph{Johnson graph} $J(n,k)$, with $n>k$, are the $k$-subsets of $[n]$, but two $k$-subsets are adjacent when their intersection has size $k-1$.

%

It is easy to see that the Kneser graph $K(n,k)$ is connected if and only if $n>2k$: if $n<2k$, there are no edges, while if $n=2k$, the Kneser graph is a perfect matching.  Also, it is not difficult to show that the Johnson graphs $J(n,k)$ and $J(n,n-k)$ are isomorphic.  Consequently, in the remainder of the paper, we shall only consider Kneser graphs with $n>2k$ and Johnson graphs with $n\geq 2k$.

A consequence of the definition is that in the Johnson graph $J(n,k)$ there is a one-to-one correspondence between intersection sizes and distances: specifically, the distance between two vertices $U$ and $W$ in $J(n,k)$ is given by
\begin{equation}
d(U,W)=|U\setminus W|=|W\setminus U|=k-|U\cap W|
\label{johnsondistance}.
\end{equation}
From this, it is clear that $J(n,k)$ has diameter~$k$.  Furthermore, one can show that the Johnson graph $J(n,k)$ is {\em distance-transitive}, i.e.\ for any vertices $U,W,X,Y$ with $d(U,W)=d(X,Y)$, there is an automorphism mapping $U$ to $X$ and $W$ to $Y$ (see~\cite{distance-regular} for more details).  In general, Kneser graphs do not have this property, as the correspondence between distances and intersection sizes does not arise.  However, there are two exceptional families, and both are ``extreme'' cases.  First, the Kneser graph $K(n,2)$ is the complement of the corresponding Johnson graph $J(n,2)$, and both graphs have diameter~2, so if $d_{K(n,2)}(U,W)=1$ then $d_{J(n,2)}(U,W)=2$, and vice-versa.  Secondly, there is the Kneser graph $K(2k+1,k)$ (known as the {\em Odd graph}: see \cite{Biggs79} for details).  The notation $O_{k+1}$ is often used to denote this graph, with the subscript $k+1$ being chosen as it is the valency of the graph; this family includes the Petersen graph as $O_3$.  The distance between two vertices in an Odd graph is determined exactly by the size of the intersection of the corresponding $k$-subsets, but by a different rule:
\begin{eqnarray*}
d(U,W) = 2r   & \iff & |U\cap W| = k-r; \\
d(U,W) = 2r+1 & \iff & |U\cap W| = r.
\end{eqnarray*}

In general, the distance between two vertices of a Kneser graph $K(n,k)$ is specified by the size of the intersection of the corresponding $k$-subsets (but not with a one-to-one correspondence).  If $n\geq 3k-1$, it is not difficult to see that two non-adjacent vertices of $K(n,k)$ share a common neighbour, and thus the distance between vertices $U$ and $W$ is either 1 or 2, depending on whether $U\cap W$ is empty or not.  More generally, if we write $n=2k+b$, it was shown in~\cite{valencia-pabon} that distances in $K(2k+b,k)$ are given by the following formula:
\begin{equation}
d(U,W)=\min\left\{2\left\lceil\frac{k-s}{b}\right\rceil, \, 2\left\lceil\frac{s}{b}\right\rceil+1\right\}\label{kneserdistance}
\end{equation}
for $U,W\in V(K(2k+b,k))$ and $s=|U\cap W|$.


In this paper, we are concerned with constructing resolving sets for Johnson and Kneser graphs.  To begin, we show that resolving sets for the two families of graphs are related in a straightforward way.

\begin{lemma}\label{imp}
Suppose $n>2k$.  Any resolving set $\mathcal{S}$ for the Kneser graph $K(n,k)$ is a resolving set for $J(n,k)$.  Thus $\beta(J(n,k))\leq\beta(K(n,k))$.
\end{lemma}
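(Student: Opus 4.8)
The plan is to exploit the fact that $J(n,k)$ and $K(n,k)$ have exactly the same vertex set, namely $\binom{[n]}{k}$, together with the observation that in both graphs the distance between two vertices is controlled by the size of the intersection of the corresponding $k$-subsets --- but with a crucial asymmetry in how. First I would recall from \eqref{johnsondistance} that in $J(n,k)$ we have $d(U,X) = k - |U\cap X|$, so the passage from intersection size to Johnson distance is a \emph{bijection}; consequently a vertex $X$ resolves a pair $U,W$ in $J(n,k)$ if and only if $|U\cap X|\neq|W\cap X|$. Next I would invoke \eqref{kneserdistance} (from~\cite{valencia-pabon}), which tells us that in $K(n,k)$ the distance $d(U,X)$ is a \emph{function} of $|U\cap X|$; only this direction of the dependence is needed, and the hypothesis $n>2k$ is what guarantees $K(n,k)$ is connected so that these distances are finite.

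The heart of the argument is the contrapositive. Suppose $X$ fails to resolve the pair $U,W$ in $J(n,k)$. Then by the remark above $|U\cap X| = |W\cap X|$, and substituting equal intersection sizes into formula~\eqref{kneserdistance} gives $d_{K(n,k)}(U,X) = d_{K(n,k)}(W,X)$, so $X$ fails to resolve $U,W$ in $K(n,k)$ as well. Equivalently: any vertex resolving a given pair in the Kneser graph also resolves that same pair in the Johnson graph. Hence, if $\mathcal{S}$ resolves $K(n,k)$, then for each pair $U,W$ of distinct vertices we may choose some $X\in\mathcal{S}$ with $d_{K(n,k)}(U,X)\neq d_{K(n,k)}(W,X)$; that same $X$ satisfies $d_{J(n,k)}(U,X)\neq d_{J(n,k)}(W,X)$, so $\mathcal{S}$ is a resolving set for $J(n,k)$. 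Taking $\mathcal{S}$ to be a metric basis of $K(n,k)$ then yields a resolving set for $J(n,k)$ of size $\beta(K(n,k))$, and therefore $\beta(J(n,k))\leq\beta(K(n,k))$.

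I do not anticipate any genuine obstacle here; the proof is short. The only point requiring care is keeping the logical direction straight: the implication (and the corresponding inequality) runs one way only, because the intersection-size-to-distance correspondence is injective in $J(n,k)$ but merely a well-defined function in $K(n,k)$. It is precisely this asymmetry that both makes the argument work and prevents the reverse statement from being derivable by the same reasoning.
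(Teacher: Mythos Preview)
Your proposal is correct and follows essentially the same approach as the paper's proof. The paper argues directly (if $X$ resolves $U,W$ in $K(n,k)$ then $|X\cap U|\neq|X\cap W|$, hence $X$ resolves them in $J(n,k)$), while you phrase it as the contrapositive, but the underlying ingredients---that Kneser distance is a function of intersection size and Johnson distance is a bijection with it---are identical.
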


\begin{proof}
Suppose that the vertex $X$ in the Kneser graph $K(n,k)$ resolves the pair $U,W\in V(K(n,k))$. Clearly then $|X\cap U|\neq|X\cap W|$. By Equation~\ref{johnsondistance}, $U$ and $W$ are also resolved by $X$ in $J(n,k)$, and therefore the result follows.
\end{proof}

The converse of this lemma is not true in general, apart from the two exceptional families of Kneser graphs listed above, 
namely $K(n,2)$ and $K(2k+1,k)$.  In the first of those cases, any resolving set for $J(n,2)$ is also a resolving set for $K(n,2)$, and hence $\beta(J(n,2))=\beta(K(n,2))$; likewise, any resolving set for $J(2k+1,k)$ is also a resolving set for the Odd graph $O_{k+1}=K(2k+1,k)$, and hence $\beta(J(2k+1,k))=\beta(K(2k+1,k))$.

For $n>2k$, the Johnson graph $J(n,k)$ and Kneser graph $K(n,k)$ have the same automorphism group, namely the symmetric group~$\Sym(n)$ in its action on the $k$-subsets of $[n]$ (see~\cite[Sections 2.5 and 3.8]{bsmd}).  (If $n=2k$, then $\Aut(J(2k,k))\cong \Sym(2k)\times \mathbb{Z}_2$: the extra automorphisms arise from being able to interchange a $k$-subset with its complement.)  Thus, for $n>2k$, $\detnum(J(n,k))=\detnum(K(n,k))$.  
A summary of results about $\detnum(J(n,k))$ and $\detnum(K(n,k))$ can be found in \cite[Section 2.5]{bsmd};
in particular, in~\cite{kneserdet} 
the following result was obtained.

\begin{thm}[
{C\'aceres {\em et al.}\ \cite{kneserdet}}%
]
\label{thm:kneserdet}
Suppose $n>\binom{k+1}{2}$, and let $d$ be an integer such that $3\leq k+1 \leq d$.  Then whenever the inequality 
\[ \left\lfloor \frac{(d-1)(k+1)}{2} \right\rfloor < n-1 \leq \left\lfloor \frac{d(k+1)}{2} \right\rfloor \]
is satisfied, it follows that $\detnum(J(n,k))=\detnum(K(n,k))=d$.
\end{thm}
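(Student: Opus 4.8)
The plan is to compute the common value $\detnum(J(n,k))=\detnum(K(n,k))$ directly; these agree because, under the hypotheses, $n>2k$ (this follows from $n>\binom{k+1}{2}$ together with $k+1\le d$, as one checks case by case). By definition this common value is the least size of a base for $\Sym(n)$ acting on $\binom{[n]}{k}$, and the key step is a combinatorial reformulation of ``base''. A permutation $\sigma\in\Sym(n)$ fixes a $k$-set $B$ setwise exactly when it preserves the partition $\{B,[n]\setminus B\}$; hence $\sigma$ fixes each of $B_1,\dots,B_t$ setwise exactly when it preserves every atom of the Boolean subalgebra of $2^{[n]}$ generated by $B_1,\dots,B_t$, i.e.\ when it preserves the \emph{signature} map $x\mapsto\{\,i:x\in B_i\,\}\subseteq[t]$. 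The setwise stabiliser of $B_1,\dots,B_t$ is therefore $\prod_A\Sym(A)$, the product over atoms $A$, which is trivial iff all $n$ signatures are distinct. Since coordinate $i$ is $1$ in exactly $|B_i|=k$ of the signatures, $\{B_1,\dots,B_t\}$ is a base iff its signatures form a family of $n$ pairwise distinct subsets of $[t]$ in which every element of $[t]$ lies in exactly $k$ members. So it suffices to show that the least such $t$ equals $d$.

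For the lower bound $\detnum(J(n,k))\ge d$: if such a family existed on the ground set $[d-1]$, its total size $\sum_x|\mathrm{sig}(x)|$ would equal $\sum_{i=1}^{d-1}|B_i|=(d-1)k$; but any family of $n$ distinct subsets of $[d-1]$ has total size at least $2n-d-1$, because at most $d$ of the subsets have size $\le 1$ (so at least $n-d$ have size $\ge 2$, and at most one contributes $0$). Hence $(d-1)k\ge 2n-d-1$, i.e.\ $(d-1)(k+1)\ge 2(n-1)$, i.e.\ $n-1\le(d-1)(k+1)/2$, which contradicts the hypothesis $n-1>\lfloor(d-1)(k+1)/2\rfloor$ (equivalent to $n-1>(d-1)(k+1)/2$ since $n-1$ is an integer). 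One notes $k\ge 2$ forces $n>d$, so this count is non-vacuous.

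For the upper bound $\detnum(J(n,k))\le d$: put $m:=2(n-1)-dk$. The right-hand hypothesis $n-1\le d(k+1)/2$ gives $m\le d$, while the left-hand one together with $d\ge k+1$ gives $n-1>(d-1)(k+1)/2\ge dk/2$, hence $m\ge 1$. Let $H$ be a simple graph on vertex set $[d]$ having exactly $m$ vertices of degree $k-1$ and $d-m$ vertices of degree $k$: this near-regular graph exists because its maximum degree $k$ is at most $d-1$ and its degree sum $dk-m=2(dk-n+1)$ is even. Writing $I$ for the set of degree-$(k-1)$ vertices, set
\[
\mathcal{F}=\{\emptyset\}\cup\{\,\{i\}:i\in I\,\}\cup E(H),
\]
a family of $1+m+(dk-m)/2=n$ pairwise distinct subsets of $[d]$ in which every $i\in[d]$ lies in exactly $1+\deg_H(i)=k$ members if $i\in I$ and exactly $\deg_H(i)=k$ members otherwise. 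This realises the required family, and combined with the lower bound yields $\detnum(J(n,k))=\detnum(K(n,k))=d$.

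The main obstacle is the upper bound — specifically, choosing the number $m$ of singletons so that the residual degree requirement is met by an honest simple graph. This is what forces the degree sequence to be near-regular with an even sum, and it is precisely why the hypothesis is phrased through those two floor functions; verifying existence of the near-regular graph (and checking the boundary case $d=k+1$, where $n>\binom{k+1}{2}$ is doing real work) is the one genuinely delicate point. The signature reformulation and the lower-bound double count are comparatively routine.
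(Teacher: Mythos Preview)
This theorem is not proved in the present paper: it is quoted, with attribution to C\'aceres \emph{et al.}\ \cite{kneserdet}, and used only to feed a lower bound on metric dimension via Proposition~\ref{prop:detmet}. So there is no in-paper argument to compare against.

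Your proof is correct and self-contained. The signature reformulation of a base for $\Sym(n)$ acting on $k$-sets is exactly right, the double-counting lower bound is clean, and the upper-bound construction via a near-regular graph on $[d]$ works as stated (the degree-sum and maximum-degree checks are the right ones; existence of such an almost-regular simple graph is standard). Two small presentational points: (i) your lower-bound paragraph only explicitly rules out $t=d-1$, but since any base of size $t$ extends to one of size $t+1$ by appending an arbitrary $k$-set, this suffices; (ii) the claim $n>2k$ actually follows from the left-hand floor inequality together with $d\ge k+1$ (giving $n-1>dk/2\ge k(k+1)/2$), rather than from $n>\binom{k+1}{2}$ alone, which at $k=2$ would only give $n\ge 4$. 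Neither affects the validity of the argument.
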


By Proposition~\ref{prop:detmet}, these provide a lower bound of approximately $2n/k$ on the metric dimension of these graphs.  We note that for fixed values of $k$, this lower bound is linear in $n$.

The metric dimension of the Johnson graph $J(n,2)$, and thus also the Kneser graph $K(n,2)$, were determined precisely in~\cite{bsmd}: the values depend on congruence classes modulo~$3$.
\begin{thm}[
{\cite[Corollary 3.33]{bsmd}}%
] 
\label{th:k=2}
Suppose $n\geq 6$.  Then for the metric dimension of the Johnson graph $J(n,2)$ and Kneser graph $K(n,2)$, where $n\equiv i \pmod 3$ (for $i=0,1,2$), we have $\beta(J(n,2))=\beta(K(n,2))=\frac{2}{3}(n-i)+i$.
\end{thm}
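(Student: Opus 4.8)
The plan is to exploit the especially transparent metric structure of $J(n,2)$: it has diameter~$2$, with $d(U,W)=1$ when $U\cap W\neq\emptyset$ and $d(U,W)=2$ otherwise. Since the vertices of $J(n,2)$ are the $2$-subsets of $[n]$, i.e.\ the edges of $K_n$, any candidate resolving set $\mathcal{S}$ may be identified with the edge set of a graph $H$ on vertex set $[n]$. For $x=\{a,b\}\in\mathcal S$ and a vertex $U$ of $J(n,2)$ we have $d(U,x)=0$ if $U=x$, $d(U,x)=1$ if $U$ meets $\{a,b\}$, and $d(U,x)=2$ otherwise; writing $N_H(j)$ for the set of edges of $H$ through the point $j\in[n]$, it follows that every edge $x$ of $H$, regarded as a vertex of $J(n,2)$, is distinguished from all other vertices $U$ by the coordinate at $x$ (since $d(x,x)=0<d(U,x)$), while the distance vector of a vertex $U=\{u,v\}$ with $\{u,v\}\notin\mathcal S$ is determined by, and determines, the set $N_H(u)\cup N_H(v)$. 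Hence $\mathcal S$ resolves $J(n,2)$ if and only if the map $\{u,v\}\mapsto N_H(u)\cup N_H(v)$ is injective on the $2$-subsets of $[n]$ that are not edges of $H$.

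The first step is to convert this into a structural condition on $H$. I would examine the two ways the map can fail to be injective. If it fails on a pair of non-edges sharing a vertex, a short argument shows this forces two distinct points $v,w$ with $N_H(v)=N_H(w)$, which (as $H$ is simple) means either that $v,w$ are both isolated or that $\{v,w\}$ is a $K_2$-component. If it fails on a pair of disjoint non-edges $\{u,v\},\{u',v'\}$, then every edge of $H$ in the common value $N_H(u)\cup N_H(v)$ must join $\{u,v\}$ to $\{u',v'\}$; one deduces that $Q=\{u,v,u',v'\}$ is a union of connected components of $H$ and that $H[Q]$ is a subgraph of $K_{2,2}$ with parts $\{u,v\}$ and $\{u',v'\}$ --- which, once the two preceding obstructions are excluded, is possible exactly when $H$ has a component isomorphic to $P_4$ or to $C_4$, or has both an isolated vertex and a $P_3$-component. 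The conclusion is a clean characterisation: $\mathcal S$ is a resolving set for $J(n,2)$ if and only if $H$ has at most one isolated vertex, no component equal to $K_2$, $P_4$ or $C_4$, and --- if $H$ has an isolated vertex --- no $P_3$-component.

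For the lower bound, let $\mathcal S$ be any resolving set and $H$ the associated graph. If $H$ has no isolated vertex, each component has at least three vertices, so there are at most $\lfloor n/3\rfloor$ of them, whence $|\mathcal S|=|E(H)|\geq n-\lfloor n/3\rfloor=\lceil 2n/3\rceil$. If $H$ has a (necessarily unique) isolated vertex, the other $n-1$ vertices lie in components of size at least three, none of which is a $P_3$; a size-three component is then a triangle (three edges) and a component on $t\geq4$ vertices has at least $t-1$ edges, so $|E(H)|\geq n-1-b$, where $b$ is the number of components on at least four vertices. Since those components use at least $4b$ vertices, $b\leq(n-1)/4$, and the elementary inequality $\lfloor(n-1)/4\rfloor\leq\lfloor n/3\rfloor-1$ (valid for $n\geq6$) again gives $|E(H)|\geq\lceil 2n/3\rceil$. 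As $\lceil 2n/3\rceil=\tfrac{2}{3}(n-i)+i$, this yields $\beta(J(n,2))\geq\tfrac{2}{3}(n-i)+i$. (For $n\equiv0\pmod3$ this bound is also immediate from Proposition~\ref{prop:detmet} and Theorem~\ref{thm:kneserdet}.)

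For the matching upper bound I would exhibit, in each residue class, a graph $H$ with exactly $\lceil 2n/3\rceil$ edges meeting the characterisation: for $n\equiv0\pmod3$, a disjoint union of $n/3$ copies of $P_3$; for $n\equiv1$, $\tfrac{n-4}{3}$ copies of $P_3$ together with one copy of $K_{1,3}$; for $n\equiv2$, $\tfrac{n-5}{3}$ copies of $P_3$ together with one copy of $P_5$. In each case $H$ is a forest with no isolated vertex and no component equal to $K_2$ or $P_4$, so by the characterisation $E(H)$ is a resolving set, of size $\lceil 2n/3\rceil=\tfrac{2}{3}(n-i)+i$. Finally, since $K(n,2)$ is the complement of $J(n,2)$ and both graphs have diameter~$2$, a set of vertices resolves one precisely when it resolves the other, so $\beta(K(n,2))=\beta(J(n,2))$. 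I expect the one genuinely delicate point to be the characterisation in the second paragraph --- in particular showing that a failure of injectivity on disjoint non-edges forces the four points to span a union of components contained in a $K_{2,2}$, and then enumerating exactly which small components this allows; after that, the counting and the constructions are routine.
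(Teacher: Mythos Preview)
The paper does not contain a proof of this theorem: it is quoted verbatim as \cite[Corollary~3.33]{bsmd} and used only as background, so there is no ``paper's own proof'' to compare against.

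That said, your argument is correct and is essentially the approach taken in the cited source. Identifying a candidate resolving set with the edge set of a graph $H$ on $[n]$, reducing the resolving property to injectivity of $\{u,v\}\mapsto N_H(u)\cup N_H(v)$ on non-edges, and then extracting the structural characterisation (at most one isolated vertex; no $K_2$-, $P_4$- or $C_4$-component; and no $P_3$-component if an isolated vertex is present) is exactly the line of reasoning in~\cite{bsmd}. Your case analysis for the characterisation is sound, the counting for the lower bound is accurate (including the auxiliary inequality $\lfloor (n-1)/4\rfloor \leq \lfloor n/3\rfloor -1$ for $n\geq 6$), and your three explicit constructions for the upper bound are valid and attain $\lceil 2n/3\rceil = \tfrac{2}{3}(n-i)+i$. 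The transfer to $K(n,2)$ via complementation in diameter~$2$ is also exactly what the present paper notes after Lemma~\ref{imp}.
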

In fact, for $n\equiv 0 \pmod 3$, equality is achieved in Proposition~\ref{prop:detmet}, while in the other cases we have a difference of~$1$ between the determining number and metric dimension (see~\cite{bsmd} for details).

Our goal in this paper is not to obtain exact values for the metric dimension of Johnson and Kneser graphs, but rather to (i) give explicit constructions of resolving sets, and (ii) demonstrate how various interesting combinatorial and geometric structures may be used as resolving sets for these graphs.  In particular, some of our constructions provide good upper bounds on the metric dimension of $J(n,k)$ and/or $K(n,k)$.  

The remainder of the paper is organized into four sections: in Section~\ref{sec:general} we give some general constructions; Sections~\ref{sec:johnson} and~\ref{sec:kneser} are devoted to Johnson and Kneser graphs, respectively; Section~\ref{sec:final} has some concluding remarks.

\section{General constructions: partitioning the set $[n]$} \label{sec:general}

In this section, we give some constructions for resolving sets of Johnson and Kneser graphs, for arbitrary values of $n$ and $k$.  Each of these constructions involves specifying an appropriate partition of the set $[n]$, and taking subsets of the parts as the vertices of a resolving set.  We give two related but different constructions of resolving sets, considering Johnson and Kneser graphs separately; however, in the case $k=2$, the two constructions coincide, and each generalizes the construction in~\cite{bsmd} which yields Theorem~\ref{th:k=2}.  We then give an improved construction for Kneser graphs of diameter~$3$.

\subsection{A partitioning construction for Johnson graphs}
\label{sec:sets}

Recall from Equation~\ref{johnsondistance} that the distance between two vertices $U$ and $W$ in the Johnson graph $J(n,k)$ is given by
\begin{equation*}
d(U,W)=|U\setminus W|=|W\setminus U|=k-|U\cap W|.
\end{equation*}
Thus, a vertex $X\in V(J(n,k))$ resolves the pair $U,W\in V(J(n,k))$ if and only if $|X\cap U|\neq|X\cap W|$, which is equivalent to $|X\cap (U\setminus W)| \neq|X\cap (W\setminus U)|$.
A straightforward consequence is the following lemma.

\begin{lemma}\label{carjohnson}
A set of vertices $\mathcal{S}$ is a resolving set for $J(n,k)$ if and only if for any two disjoint non-empty sets $U,W\subset [n]$ such that $|U|=|W|\leq k$, there exists a vertex $X\in \mathcal{S}$ satisfying $|X\cap U|\neq |X\cap W|$.
\end{lemma}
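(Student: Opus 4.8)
The plan is to prove Lemma~\ref{carjohnson} by unwinding the characterization of resolving vertices in $J(n,k)$ established just above the statement, and then reducing the general resolving condition to the case of disjoint sets via the observation that only the symmetric difference of a pair matters. Concretely, recall that $X$ resolves $U,W$ in $J(n,k)$ precisely when $|X\cap U|\neq |X\cap W|$, equivalently $|X\cap(U\setminus W)|\neq |X\cap(W\setminus U)|$. So a set $\mathcal{S}$ is a resolving set if and only if for every pair of distinct vertices $U,W\in V(J(n,k))$ there is $X\in\mathcal{S}$ with $|X\cap(U\setminus W)|\neq |X\cap(W\setminus U)|$. The content of the lemma is that this condition depends only on the pair $(U\setminus W, W\setminus U)$, which is an arbitrary pair of disjoint non-empty sets of equal size $m$ with $1\leq m\leq k$.

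For the forward direction, suppose $\mathcal{S}$ is a resolving set for $J(n,k)$, and let $U',W'\subset[n]$ be disjoint non-empty with $|U'|=|W'|=m\leq k$. I would extend $U'$ and $W'$ to $k$-subsets $U,W$ of $[n]$ by adjoining a common set $Z$ of size $k-m$ chosen disjoint from $U'\cup W'$ — this is possible exactly because $n\geq 2k$ (indeed $|U'\cup W'|=2m\leq 2k\leq n$ leaves room, and more carefully one needs $n-2m\geq k-m$, i.e.\ $n\geq k+m$, which holds since $m\leq k\leq n-k$). Then $U\setminus W=U'$ and $W\setminus U=W'$, so the vertex $X\in\mathcal{S}$ resolving the pair $U,W$ satisfies $|X\cap U'|\neq|X\cap W'|$, as required. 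For the converse, suppose the stated condition holds, and let $U,W$ be any two distinct vertices of $J(n,k)$; set $U'=U\setminus W$ and $W'=W\setminus U$. These are disjoint, non-empty (as $U\neq W$ forces $|U\cap W|<k$), and of equal size $m=k-|U\cap W|\leq k$, so by hypothesis some $X\in\mathcal{S}$ has $|X\cap U'|\neq|X\cap W'|$, hence $|X\cap U|\neq|X\cap W|$ and $X$ resolves $U,W$.

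There is essentially no hard step here; the only point requiring a moment's care is verifying that the extension-to-$k$-sets in the forward direction is always possible, which is where the standing assumption $n\geq 2k$ (stated earlier in the excerpt) gets used — and it suffices since $m\leq k$ gives $k+m\leq 2k\leq n$. I would state this bound explicitly to keep the argument self-contained. Everything else is a direct translation through Equation~\ref{johnsondistance} and the displayed equivalence $|X\cap U|\neq|X\cap W|\iff |X\cap(U\setminus W)|\neq|X\cap(W\setminus U)|$ that precedes the lemma.
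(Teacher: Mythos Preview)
Your proof is correct and follows exactly the approach the paper intends: the paper states the lemma as ``a straightforward consequence'' of the preceding equivalence $|X\cap U|\neq|X\cap W|\iff |X\cap(U\setminus W)|\neq|X\cap(W\setminus U)|$ and leaves the details implicit, and you have simply written those details out, including the extension step (using $n\geq 2k$) that realizes an arbitrary pair of disjoint equal-size sets as the symmetric difference of two $k$-subsets.
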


Our construction of a resolving set yields the following result.

\begin{thm}
\label{th:[n]}
For the Johnson graph $J(n,k)$ with $n\geq 2k$, we have that 
\[ \beta(J(n,k))\leq \left\lfloor \frac{k}{k+1}(n+1)\right\rfloor. \]
\end{thm}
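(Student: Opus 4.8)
The plan is to exhibit, for every $n \ge 2k$, an explicit resolving set of the claimed size, invoking the criterion of Lemma~\ref{carjohnson}. First write $n = q(k+1) + r$ with $0 \le r \le k$; a short computation gives $\lfloor \tfrac{k}{k+1}(n+1)\rfloor = qk + r$, so this is the cardinality to aim for, and note that $n \ge 2k$ forces $q \ge 1$. Partition $[n]$ into parts $P_1, \dots, P_q$ of size $k+1$ together with a leftover set $R = \{z_1, \dots, z_r\}$ (empty when $r = 0$), and write $P_j = \{a_{j,0}, a_{j,1}, \dots, a_{j,k}\}$. The resolving set will be $\mathcal{S} = \mathcal{S}_0 \cup \{X_1, \dots, X_r\}$, where $\mathcal{S}_0 = \{\, P_j \setminus \{a_{j,i}\} : 1 \le j \le q,\ 1 \le i \le k \,\}$ retains $k$ of the $k+1$ ``delete one point'' $k$-subsets of each part (we deliberately omit $P_j \setminus \{a_{j,0}\}$), and $X_i = \{z_i\} \cup T$ with $T$ a fixed $(k-1)$-element subset of $P_1$, say $T = \{a_{1,2}, \dots, a_{1,k}\}$, so that $|X_i| = k$. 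One checks easily that these are $qk + r$ pairwise distinct vertices.

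To verify that $\mathcal{S}$ resolves $J(n,k)$ I would assume, for contradiction, that some pair of disjoint non-empty sets $U, W \subseteq [n]$ with $|U| = |W| \le k$ is resolved by no element of $\mathcal{S}$, and then analyse this part by part. Fix $j$ and set $d_j = |P_j \cap U| - |P_j \cap W|$. Since $|(P_j \setminus \{a_{j,i}\}) \cap U| = |P_j \cap U| - [a_{j,i} \in U]$ (as $a_{j,i} \in P_j$) and similarly for $W$, the failure of all the sets $P_j \setminus \{a_{j,i}\}$ ($1 \le i \le k$) to resolve $(U,W)$ is exactly the system of equalities $[a_{j,i} \in U] - [a_{j,i} \in W] = d_j$ for $i = 1, \dots, k$; in particular $d_j \in \{-1, 0, 1\}$. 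The heart of the argument is to rule out $d_j = \pm 1$: if $d_j = 1$ then $a_{j,1}, \dots, a_{j,k} \in U$, hence $U = \{a_{j,1}, \dots, a_{j,k}\}$ because $|U| \le k$, and then (as $W$ is disjoint from $U$) $|P_j \cap W| \le 1$, so $d_j = k - |P_j \cap W| \ge k-1 \ge 2$, a contradiction; $d_j = -1$ is symmetric. Thus every $d_j = 0$, which forces $a_{j,1}, \dots, a_{j,k} \notin U \cup W$ and, since $a_{j,0}$ lies in at most one of $U, W$, also $P_j \cap (U \cup W) = \emptyset$. Hence $U \cup W \subseteq R$. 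Finally, as $T \subseteq P_1$ is disjoint from $R$, the failure of each $X_i$ to resolve $(U,W)$ simplifies to $[z_i \in U] = [z_i \in W]$, i.e.\ $z_i \notin U \cup W$; together with $U \cup W \subseteq R$ this gives $U \cup W = \emptyset$, contradicting $U \neq \emptyset$.

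This reasoning needs $k \ge 3$, since it is precisely here that the possibilities $d_j = \pm 1$ are eliminated; the case $k = 2$ is already covered, with equality, by Theorem~\ref{th:k=2} (and the construction above, with the auxiliary sets $X_i$ chosen a little differently, specialises to the one underlying that result), while $k = 1$ is degenerate. I expect the main obstacle to be the part-by-part analysis above --- in particular, the realisation that keeping only $k$ of the $k+1$ delete-one subsets per part still suffices to force $d_j = 0$, which is exactly what fixes the bound at $\tfrac{k}{k+1}(n+1)$ rather than something larger. A secondary technical point is the correct handling of the leftover set $R$ through the vertices $X_i$, which is needed whenever $r \neq 0$.
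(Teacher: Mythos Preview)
Your proof is correct and uses essentially the same construction as the paper: partition $[n]$ into $(k+1)$-blocks, keep $k$ of the $k+1$ ``delete-one'' $k$-subsets from each block, and handle the leftover $R$ with sets of the form $\{z_i\}\cup T$ for a fixed $(k-1)$-subset $T$ of the first block. The verification differs only in style: the paper works directly with two $k$-subset vertices and gives a constructive case analysis on the first block where they differ, whereas you invoke Lemma~\ref{carjohnson}, assume an unresolved disjoint pair $(U,W)$, and use the quantities $d_j=[a_{j,i}\in U]-[a_{j,i}\in W]$ to force $P_j\cap(U\cup W)=\emptyset$ for every $j$. Your route exploits the disjointness of $U$ and $W$ furnished by the lemma, which makes the $d_j=0$ step particularly clean; the paper's direct argument avoids the lemma but needs a few more sub-cases. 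Both need $k\ge 3$ for the main argument and defer $k=2$ to Theorem~\ref{th:k=2}.
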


\begin{proof}
As we have already noted, the case $k=2$ was considered in~\cite{bsmd} (see Theorem~\ref{th:k=2} above), so we will suppose that $k>2$. 
We will divide our construction into two separate cases.  First, we will assume that $n=r(k+1)$ for some positive integer $r$, as our construction is more straightforward in that situation; later, we will suppose otherwise.

Consider the set $[n]=\{1,\ldots n\}$, and partition it into $r$ subsets $[n]=N_1\cup \cdots \cup N_r$ where $N_i=\{(i-1)(k+1)+1,\ldots ,i(k+1)\}$ and $1\leq i\leq r$. For each $i\in \{1,\ldots, r\}$, let $\mathcal{S}_i$ be the set of all $k$-subsets of $N_i$, but with one arbitrarily-chosen set removed. Note that any $X\in \mathcal{S}_i$ can be specified by the unique element of $N_i$ which is not in $X$.  Our claim is that $\mathcal{S}=\mathcal{S}_1\cup \cdots \cup \mathcal{S}_r$ is a resolving set for $J(n,k)$.  

Let $U$ and $W$ be two distinct vertices of $J(n,k)$, and consider how they intersect with the sets $N_1,\ldots,N_r$.  They can be partitioned into $U=U_1\cup \cdots \cup U_r$ and $W=W_1\cup \cdots \cup W_r$, where $U_i=U\cap N_i$ and $W_i=W\cap N_i$; note that some of these intersections may be empty. Our goal is to find a vertex $X\in \mathcal{S}$ which resolves $U$ and $W$, that is, $|X\cap U|\neq |X\cap W|$.  Note that if $X\in \mathcal{S}_i$, we have $X\cap U = X\cap U_i$, so it suffices to show that $|X\cap U_i| \neq |X\cap W_i|$.

Since $U\neq W$, there exists an index $i\in \{1,\ldots ,r\}$ such that $U_i\neq W_i$.  Then the following possibilities may occur.

\begin{description}
\item[Case 1.] Suppose first that $U_i=\emptyset$ and $W_i\neq \emptyset$. In this case, there exists some $X\in \mathcal{S}_i$ which resolves $U$ and $W$, since $X\cap U_i=\emptyset $ and we can choose an $X$ so that $X\cap W_i\neq \emptyset$.

\item[Case 2.] Now suppose that both $U_i$ and $W_i$ are non-empty and have different sizes; without loss of generality, we may assume that $0<|U_i|<|W_i|$.  We may also assume that $|W_i|<k$, as otherwise, there exists $j\neq i$ where $W_j=\emptyset$ and $U_j\neq \emptyset$, where we can apply Case 1.  Pick an element $a\notin W_i$ so that $X=N_i\setminus\{a\} \in \mathcal{S}_i$ (such an element exists, since $|W_i|<k$); note that this implies $W_i\subset X$.  Then we have $|X\cap U_i|\leq |U_i| < |W_i| = |X \cap W_i|$, and thus $X$ resolves $U$ and $W$.

\item[Case 3.] Finally, suppose that $|U_i|=|W_i|$ and both are non-empty.  Then there exist elements $a\in W_i\setminus U_i$ and $b \in U_i\setminus W_i$.  Now, $X=N_i\setminus \{a\}$ resolves $U$ and $W$, since $|X\cap U_i| = |U_i|$, but $|X\cap W_i|=|W_i|-1$; similarly, $X'=N_i\setminus \{b\}$ resolves $U$ and $W$, since $|X'\cap W_i| = |W_i|$, but $|X'\cap U_i|=|U_i|-1$.  At least one of $X,X'\in\mathcal{S}_i$.
\end{description}

Since $\mathcal{S}$ is a resolving set for $J(n,k)$ with $kr$ elements and $r=\frac{n}{k+1}$, the result follows.

Now we consider the case where $n$ is not divisible by $k+1$, i.e.\ where \mbox{$n=r(k+1)+j$} with \mbox{$1\leq j \leq k$.}  In this case, we partition the set $[n]$ as follows: let \mbox{$[n]=N_1\cup\cdots \cup N_r\cup N^*$,} where $N_1,\ldots,N_r$ are as before, and where \mbox{$N^*=\{n-j+1,\ldots ,n\}$.}  Then let $\mathcal{S}'=\mathcal{S}\cup \mathcal{S}^*$, where $\mathcal{S}$ is as defined above, and where the set $\mathcal{S}^*$ contains all sets of the form \mbox{$\{1,2,\ldots,k-1\}\cup\{x\}$,} for $x \in N^*$.  We claim that $\mathcal{S}'$ is a resolving set for $J(n,k)$.

Let $U$ and $W$ be two distinct vertices of $J(n,k)$.  Similar to the above, we partition $U$ into $U_1\cup \cdots \cup U_r \cup U^*$, where $U_i=U\cap N_i$ and $U^*=U\cap N^*$; likewise, we partition $W$ into $W=W_1\cup \cdots \cup W_r\cup W^*$.  If $U^*=W^*=\emptyset $, then $\mathcal{S}$ clearly resolves $U$ and $W$ by the arguments above; hence it suffices to consider the case in which either $U^*$ or $W^*$ are non-empty.

If $j=k$, it is possible that one of $U=N^*$ or $W=N^*$; without loss of generality assume that $U=N^*$, in which case any $X\in \mathcal{S}$ with $X\cap W\neq \emptyset$ resolves $U$ and $W$.  Otherwise, we must have that $U_i$ and $W_i$ are non-empty for some $i\in\{1,\ldots ,r\}$.

If $U_i\neq W_i$ for some index $i$, then the vertices can be resolved by some \mbox{$X\in \mathcal{S}$} as shown in Cases 1--3 above. Only when $U_i=W_i$ for all $i\in \{1,\ldots ,r\}$ is it necessary to choose a vertex from $\mathcal{S}^*$. However, since $U\neq W$, we have $U^*\neq W^*$ and both are non-empty.  Also, $|U^*|=|W^*|$, so there exists an element $x \in U^*\setminus W^*$.  Then $X=\{1,2,\ldots ,k-1\}\cup\{x\} \in \mathcal{S}^*$, with 
\begin{eqnarray*}
|X\cap U| & = & |\{1,\ldots,k-1\}\cap U|+1 \\
          & = & |\{1,\ldots,k-1\}\cap W|+1 \\
          & = & |X\cap W|+1.
\end{eqnarray*}
Hence $X$ resolves $U$ and $W$.

To conclude, $\mathcal{S}'=\mathcal{S}\cup\mathcal{S}^*$ is a resolving set for $J(n,k)$ of size 
\begin{eqnarray*}
rk+j & = & rk+\left\lfloor \frac{k}{k+1}(j+1)\right\rfloor \\
     & = & \left\lfloor \frac{k}{k+1}(n+1)\right\rfloor, 
\end{eqnarray*}
and the proof is complete.
\end{proof}

We remark that this construction has been adapted for the {\em Grassmann graphs} (see~\cite[Section 3]{grassmann}), the so-called ``$q$-analogue'' of the Johnson graphs, where the vertices are the $k$-dimensional subspaces of the vector space $\mathbb{F}_q^n$ and two vertices are adjacent if they intersect in a $(k-1)$-dimensional subspace.  
Subsequently, this construction was further adapted for various related classes of graphs, including the {\em bilinear forms graphs}, the {\em doubled Grassmann graphs} and {\em twisted Grassmann graphs}: see~\cite{fengwang,GuoWangLi}.  We also remark that Guo, Wang and Li have independently obtained the same bound as in Theorem~\ref{th:[n]} for the special case of $J(2k+1,k)$: see \cite[Theorem 2.2]{GuoWangLi}.

\subsection{A partitioning construction for Kneser graphs} 
\label{sec:kneserpart}

Inspired by the construction above which gives resolving sets for Johnson graphs, in this subsection we obtain a construction of resolving sets for Kneser graphs.  In a Kneser graph $K(n,k)$ with $n>2k$, we observe that for vertices $U,W$, if another vertex $X$ satisfies $X\cap U = \emptyset$ and $W\cap X\neq \emptyset$, then $X$ resolves $U$ and $W$ (since $X$ is adjacent to $U$ but not adjacent to $W$).  If $n\geq 3k-1$, this is the only way for a pair of vertices to be resolved (since $K(n,k)$ has diameter~$2$ in that case).  When $n<3k-1$, we give a variation on the construction below which gives an improved bound.

\begin{thm}\label{th:kneserparts}
For the Kneser graph $K(n,k)$ with $n > 2k$, we have that
\[
\beta(K(n,k)) \leq \left\lceil \frac{n}{2k-1} \right\rceil  \left( \binom{2k-1}{k}-1 \right).
\]
\end{thm}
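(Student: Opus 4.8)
The plan is to mimic the partitioning strategy of Theorem~\ref{th:[n]}, but adapted to the key resolving mechanism in Kneser graphs identified just before the statement: a vertex $X$ resolves $U,W$ whenever $X$ is disjoint from one of them but meets the other. First I would partition $[n]$ into $\lceil n/(2k-1)\rceil$ blocks $N_1,\ldots,N_m$, each of size $2k-1$ (with the last block possibly padded, or simply treated as having size $2k-1$ by overlapping slightly, or by allowing a smaller last block — one has to check the small-block case separately, but since each block need only have size $\geq k$ for the argument to make sense, and size exactly $2k-1$ gives the stated count, padding upward to $2k-1$ costs nothing). On each block $N_i$, I would take $\mathcal{S}_i$ to be \emph{all} $k$-subsets of $N_i$ except one arbitrarily chosen set, so $|\mathcal{S}_i| = \binom{2k-1}{k}-1$, and set $\mathcal{S} = \mathcal{S}_1 \cup \cdots \cup \mathcal{S}_m$, giving exactly the claimed cardinality.

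Next I would verify $\mathcal{S}$ is resolving. Take distinct vertices $U,W \in V(K(n,k))$; I want $X \in \mathcal{S}$ with $X \cap U = \emptyset$ but $X \cap W \neq \emptyset$ (or vice versa). Since $U \neq W$, pick an element $a \in W \setminus U$ (swapping roles of $U,W$ if necessary), and let $i$ be the index with $a \in N_i$. I want to build a $k$-subset $X$ of $N_i$ with $a \in X$ and $X \cap U = \emptyset$; this requires choosing $k-1$ further elements of $N_i \setminus (U \cup \{a\})$, which has size at least $(2k-1) - |U \cap N_i| - 1 \geq 2k - 2 - |U \cap N_i|$. Since $|U \cap N_i| \leq k$ always, this is $\geq k-2$, which is not quite enough — so the delicate point is exactly when $|U \cap N_i|$ is large. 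The resolution is the standard trick: if $|U \cap N_i| = k$ then $U \subseteq N_i$, so $U$ lives entirely in one block; then for \emph{any} other block $N_j$ ($j \neq i$, which exists since $m \geq 2$ as $n > 2k > 2k-1$... one should check $n>2k-1$ forces $m\geq 2$, or handle $m=1$ trivially) every $k$-subset of $N_j$ is disjoint from $U$, and since $W \neq U$ we can find such an $X$ also meeting $W$ — unless $W$ too is contained in some single block, in which case a direct case analysis on whether $U,W$ sit in the same or different blocks finishes it. When $|U \cap N_i| \leq k-1$, we have at least $2k-2-(k-1) = k-1$ elements available in $N_i \setminus (U \cup \{a\})$, exactly enough to complete $X$, and one then argues that avoiding the single excluded set of $\mathcal{S}_i$ is possible unless there is literally only one candidate, a boundary case one patches by instead picking $b \in U \setminus W$ and working symmetrically.

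The main obstacle I anticipate is precisely this bookkeeping at the extreme: ensuring that after requiring $a \in X$, $X \cap U = \emptyset$, and $|X| = k$ inside a block of size $2k-1$, there is still a \emph{valid} choice of $X$ once we remove the one forbidden set from $\mathcal{S}_i$ — i.e.\ that the number of such $X$ is at least two, or that when it is exactly one we can reroute through the symmetric choice or a different block. The block size $2k-1$ is chosen to make this just barely work: $(2k-1)-1-(k-1) = k-1$ leftover slots to fill $k-1$ positions leaves no slack when $|U\cap N_i|$ is as large as $k-1$, so the forbidden-set exclusion must be absorbed by the freedom in choosing $a$ (there may be several elements of $W\setminus U$) or by swapping to $U\setminus W$. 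I would organize this as a short lemma: ``for any two distinct $k$-sets $U,W$ and a block $N$ of size $2k-1$ meeting $W\setminus U$, all but at most one $k$-subset of $N$ that is disjoint from $U$ and meets $W$ serve to resolve'' — or more cleanly, just show directly that one of the two symmetric constructions always lands in $\mathcal{S}_i \subseteq \mathcal{S}$. Once the resolving property is established, the cardinality bound is immediate from $|\mathcal{S}| = \sum_{i=1}^m |\mathcal{S}_i| = m\bigl(\binom{2k-1}{k}-1\bigr) = \lceil n/(2k-1)\rceil\bigl(\binom{2k-1}{k}-1\bigr)$.
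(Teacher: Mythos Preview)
Your construction and overall strategy are exactly those of the paper: partition $[n]$ into blocks of size $2k-1$ (overlapping the last block with $N_1$ when $2k-1\nmid n$), take all $k$-subsets of each block minus one, and resolve a pair $U,W$ by exhibiting some $X\in\mathcal{S}$ disjoint from one of them and meeting the other. The one place where the paper is tidier is precisely the forbidden-set bookkeeping you flagged as the main obstacle: rather than fixing an element $a\in W\setminus U$ first and then rerouting when the unique candidate happens to be the excluded set, the paper first selects a block $N_i$ with $U\cap N_i\neq W\cap N_i$, both nonempty and of size at most $k-1$ (the other configurations having been disposed of), and then produces \emph{two} candidates in that same block---a $k$-subset of $N_i\setminus(U\cap N_i)$ meeting $W$ and a $k$-subset of $N_i\setminus(W\cap N_i)$ meeting $U$---which are necessarily distinct, so at most one can be the excluded set and the other lies in $\mathcal{S}_i$.
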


\proof
Suppose $n=r(2k-1)+j$, where $0\leq j \leq 2k-2$.  Partition the set \mbox{$\{1,\ldots,n-j\}$} into parts $N_1,\ldots,N_r$, each of size $2k-1$.  For each $i \in \{1,\ldots,r\}$, let $\mathcal{S}_i$ be the collection of all $k$-subsets of $N_i$ but with one arbitrarily-chosen set removed; then let  $\mathcal{S}=\mathcal{S}_1\cup\cdots\cup \mathcal{S}_r$.  If $j\neq 0$, let $N_{r+1}=\{1,\ldots,2k-j-1\}\cup\{n-j+1,\ldots,n\}$, and let $\mathcal{T}$ denote the collection of all $k$-subsets from $N_{r+1}$ but with one arbitrarily-chosen set removed.  In this case, we let $\mathcal{S}=\mathcal{S}_1\cup\cdots\cup \mathcal{S}_r \cup \mathcal{T}$.

In either situation, it is clear that the size of $\mathcal{S}$ is
\[
\left\lceil \frac{n}{2k-1} \right\rceil  \left( \binom{2k-1}{k}-1 \right).
\]
We claim that $\mathcal{S}$ is a resolving set for $K(n,k)$. To prove this claim, we need to show for any distinct pair of $k$-subsets $U,W
\in V(K(n,k))$ that either one of $U$ or $W$ is in $\mathcal{S}$, or there is a set in $\mathcal{S}$ that intersects exactly one of $U$ and $W$.  

Now, if there exists an $i$ such that $U \cap N_i \neq \emptyset$ and $W \cap N_i = \emptyset$ (or conversely $U \cap N_i = \emptyset$ and $W \cap N_i \neq \emptyset$), then any $k$-subset of $N_i$ that intersects with $U$ will not intersect with $W$. The set $\mathcal{S}$ will certainly contain many such subsets of $N_i$.  

If the above does not hold, then for any $i$, if $U \cap N_i \neq \emptyset$ then $W \cap N_i \neq \emptyset$. Since $U \neq W$, there is an $i$ such that $U \cap N_i$ and $W \cap N_i$ are distinct, and both are non-empty. Now we consider three cases:

\begin{description}
\item[Case 1.] 
If $U \cap N_i =U$ and $W \cap N_i =W$ then at least one of $U$ and $W$ will be in $\mathcal{S}$.

\item[Case 2.] If $U \cap N_i =U$ and $|W \cap N_i| \leq k-1$ then there is another part $N_{i'}$ that intersects with $W$ but not $U$, and we are done.

\item[Case 3.] Assume $|U \cap N_i| \leq k-1$. Then $|W \cap N_i| \leq k-1$, since otherwise there would exist an $i'$ such that $U \cap N_{i'} \neq \emptyset$ and $W \cap N_{i'} = \emptyset$, and again we are done.

Since $U \cap N_i \neq W \cap N_i$, at least one element from the complement $\overline{U \cap N_i}$ is in $W$, and since $|\overline{U \cap N_i}| \geq k$, there is a $k$-subset of $\overline{U \cap N_i}$ that intersects $W$ but not $U$. Similarly, there is a $k$-subset of $\overline{W \cap N_i}$ that intersects $U$ but not $W$. At least one of these $k$-subsets is in $\mathcal{S}$.
\endproof
\end{description}

\subsection{An improved construction for Kneser graphs of diameter 3}
\label{sec:kneserpart2}

For $n<3k-1$, the diameter of the Kneser graph $K(n,k)$ is greater than~$2$, and consequently it should be possible to refine our construction from the previous subsection, in order to obtain smaller resolving sets when the diameter is larger.  When $\lfloor 5k/2 \rfloor \leq n \leq 3k-2$, it follows from~\cite{valencia-pabon} that $K(n,k)$ has diameter~$3$, and that for two vertices $U,W$ the distance between them in $K(n,k)$ is as follows:
\begin{eqnarray*}
d(U,W) = 0 & \iff & |U\cap W| = k; \\
d(U,W) = 1 & \iff & |U\cap W| = 0; \\
d(U,W) = 2 & \iff & 3k-n \leq |U\cap W| \leq k-1; \\
d(U,W) = 3 & \iff & 1 \leq |U\cap W| \leq 3k-n-1.
\end{eqnarray*}

\begin{thm}\label{kneserdiam3}
For the Kneser graph $K(n,k)$ where $n$ and $k$ are integers such that $\lfloor 5k/2 \rfloor \leq n \leq 3k-2$, we have
\[
\beta(K(n,k)) \leq 2 \binom{n-k}{k}.
\]
\end{thm}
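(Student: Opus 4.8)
The plan is to exhibit an explicit resolving set of the stated size. Since $n\ge 2k$ we may choose two disjoint $k$-subsets $P,Q\subseteq[n]$; put $C=[n]\setminus(P\cup Q)$, so that $|C|=n-2k$, and set $A=Q\cup C$, $B=P\cup C$, each of size $n-k$, with $A\cup B=[n]$. Let $\mathcal S$ consist of all $k$-subsets of $A$ together with all $k$-subsets of $B$. Since $|A\cap B|=|C|=n-2k<k$ (this is where $n\le 3k-2$ enters), no $k$-set lies in $A\cap B$, so the two families are disjoint and $|\mathcal S|=2\binom{n-k}{k}$. Everything then rests on showing $\mathcal S$ is a resolving set; the point of using only \emph{two} parts, rather than the larger number needed for the diameter-$2$ bound of Theorem~\ref{th:kneserparts}, is that the larger diameter gives an extra distance class to separate vertices with.

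Write $t:=3k-n$; the hypotheses $\lfloor 5k/2\rfloor\le n\le 3k-2$ are exactly $2\le t$ and $2t-1\le k$, and $|C|=k-t$. By the displayed distance formula, $d(X,Y)$ is a function of $s=|X\cap Y|$ whose fibres (the ``distance classes'') are $\{0\}$, $\{1,\dots,t-1\}$, $\{t,\dots,k-1\}$, $\{k\}$; so $X$ resolves $U,W$ iff $|X\cap U|$ and $|X\cap W|$ lie in different fibres. Two facts are used repeatedly: since $|C|=k-t$, every vertex $Y$ has $|Y\cap(P\cup Q)|\ge t$, and every $X\in\binom{A}{k}$ has $|X\cap Q|\ge t$ (dually with $B$ and $P$); and $Y\notin\mathcal S$ precisely when $Y$ meets both $P$ and $Q$.

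Now fix distinct vertices $U,W$. If $U\in\mathcal S$ or $W\in\mathcal S$, that vertex resolves the pair (distance $0$ versus positive), so assume both meet $P$ and $Q$. If some $X\in\mathcal S$ is disjoint from exactly one of $U,W$, it resolves the pair (fibre $\{0\}$ versus another); a short count shows such an $X$ exists in $\binom{A}{k}$ whenever $U\cap A\ne W\cap A$ and $\max(|U\cap P|,|W\cap P|)\ge t$: if $|U\cap P|\ge t$ then $|A\setminus U|\ge k$, so some $k$-subset of $A$ avoids $U$, and one may take it to meet $W$ unless $W\cap A\subseteq U\cap A$, which forces $|W\cap P|>|U\cap P|\ge t$ and lets us swap $U$ and $W$. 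The same holds with $B,Q$ in place of $A,P$. As $A\cup B=[n]$ forces $U\cap A\ne W\cap A$ or $U\cap B\ne W\cap B$, we are reduced to the case $|U\cap P|,|W\cap P|\le t-1$ (when $U\cap A\ne W\cap A$) or $|U\cap Q|,|W\cap Q|\le t-1$ (when $U\cap B\ne W\cap B$), always under $|Y\cap P|+|Y\cap Q|\ge t$ for $Y\in\{U,W\}$. In the generic sub-case ($U\cap A\ne W\cap A$) one takes $X\in\binom{A}{k}$ equal to $A\setminus U$ together with a well-chosen $(t-|U\cap P|)$-subset of $U\cap A$: this makes $|X\cap U|=t-|U\cap P|\in\{1,\dots,t-1\}$, while by placing the chosen subset appropriately relative to $U\cap W\cap A$ one forces $|X\cap W|\ge t$, using $2t-1\le k$ and $|W\cap P|\le t-1$ to keep the required intersection values attainable; should this particular $X$ fail, the same construction inside $\binom{B}{k}$ succeeds. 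Two genuinely special sub-cases remain, namely $U\cap A=W\cap A$ (so $U,W$ differ only inside $P$) and its mirror $U\cap B=W\cap B$; for the first one works inside $\binom{B}{k}$, taking $X=(B\setminus U)\cup(\text{a }(t-|U\cap Q|)\text{-subset of }U\cap B)$, or, in the tightest configuration (when $|U\cap P|$ is large and $|U\cap C|\ge t$), taking $X$ to contain all of $(U\cap P)\setminus(W\cap P)$, none of $W\cap P$, all of $C\setminus(U\cap C)$, and a minimal-overlap completion inside $C$; the bound $|Y\cap(P\cup Q)|\ge t$ guarantees these completions exist.

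The hard part is this last bookkeeping: checking, in every residual configuration, that the freedom in choosing the small subset of $Q$ (or $P$) adjoined to most of $C$ really does force $|X\cap U|$ and $|X\cap W|$ onto opposite sides of the threshold $t$, or makes one of them $0$. The inequalities $|Y\cap(P\cup Q)|\ge t$ for every vertex $Y$ and $2t-1\le k$ (i.e.\ $n\ge\lfloor 5k/2\rfloor$) are the load-bearing estimates, and the ``differ only inside $P$'' sub-case is the most delicate, since there $|X\cap U|$ and $|X\cap W|$ can differ by as little as $1$, and one must exploit a specific pair $a\in(U\cap P)\setminus(W\cap P)$, $b\in(W\cap P)\setminus(U\cap P)$ to separate them.
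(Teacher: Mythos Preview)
Your resolving set is exactly the paper's: with $P=\{1,\dots,k\}$, $Q=\{n-k+1,\dots,n\}$ one has $B=N_1$ and $A=N_2$, so $\mathcal S=\binom{N_1}{k}\cup\binom{N_2}{k}$.  The issue is the verification, which you yourself describe as a sketch whose ``hard part'' is left undone.

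Concretely, in the ``generic sub-case'' you propose $X=(A\setminus U)\cup S$ with $|S|=t-|U\cap P|$, so that $|X\cap U|\in\{1,\dots,t-1\}$, and claim one can choose $S$ to force $|X\cap W|\ge t$.  This is not always possible inside $\binom{A}{k}$.  For instance, take $k=5$, $n=12$ (so $t=3$, $|C|=2$), $U=\{p_1,p_2,q_1,q_2,c_1\}$ and $W=\{p_3,q_1,q_2,c_1,c_2\}$.  Then $A\setminus U=\{q_3,q_4,q_5,c_2\}$ and every admissible $S\subseteq U\cap A=\{q_1,q_2,c_1\}\subseteq W$ gives $|X\cap W|=2<t$; swapping $U$ and $W$ does not help either.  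You assert that ``the same construction inside $\binom{B}{k}$ succeeds'' when this fails, and in this example it does---but you give no argument that it always will, and none of the later ``special sub-cases'' covers this.  So the proof has a genuine gap at exactly the point you flag as hardest.

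The paper's case analysis avoids this difficulty by aiming in the opposite direction.  Once one reduces (as you do) to $k/2<|U_1|,|W_1|\le k-1$ with $U_1\ne W_1$, the paper builds $X\subseteq N_1$ containing \emph{all} of $U_1$ (or, when $|U_1\cap W_1|>t-1$, all of $U_1\setminus W_1$ plus exactly $t-1$ points of $U_1\cap W_1$), then completes $X$ from $W_1\setminus U_1$ and $N_1\setminus(U_1\cup W_1)$ so that $|X\cap W_1|=t-1$ \emph{exactly}.  This yields $|X\cap U|\ge |U_1|>k/2\ge t$ and $|X\cap W|=t-1$, hence $d(X,U)=2$ and $d(X,W)=3$, all within a single part $N_1$.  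The three sub-cases $|U_1\cap W_1|=t-1$, $<t-1$, $>t-1$ each require only a short inequality check that $N_1\setminus(U_1\cup W_1)$ is large enough, and no fall-back to $N_2$ is ever needed.  Replacing your ``take $X$ mostly outside $U$'' heuristic with this ``take $X$ to contain $U_1$ and hit $W$ exactly $t-1$ times'' construction closes the gap cleanly.
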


\begin{proof}
Where $[n]=\{1,\ldots,n\}$, we define the overlapping subsets
\[
N_1 = \{1,2,\ldots,n-k\} , \quad N_2=\{k+1, k+2,\ldots,n\}.
\]
Then let $\mathcal{S}_i$ be the collection of all $k$-subsets of $N_i$, and set $\mathcal{S} = \mathcal{S}_1 \cup \mathcal{S}_2$. Clearly the size of $S$ is $2 \binom{n-k}{k}$, and we claim that $S$ is a resolving set for $K(n,k)$.  To do so, we must show that for any two distinct vertices $U,W$ of $K(n,k)$, there is a vertex $X\in\mathcal{S}$ satisfying $d(U,X)\neq d(W,X)$.

We remark that, since $n \in \{ \lfloor 5k/2 \rfloor ,\ldots,3k-2\}$, we have that \mbox{$n-k \in \{ \lfloor 3k/2 \rfloor ,\ldots, 2k-2\}$} and that $3k-n-1 \in
\{1,\ldots, \lceil k/2 \rceil -1 \}$.  Also, we observe that if either $U$ or $W$ is properly contained in either $N_1$ or $N_2$, then one of $U$ and $W$ belongs to $\mathcal{S}$, so we may assume otherwise.  For $i = 1,2$, we define $U_i = U \cap N_i$ and $W_i = W \cap N_i$; by our assumption, we have $|U_i| \leq k-1$ and $|W_i| \leq k-1$. Since $U$ and $W$ are distinct, $U_i \neq W_i$ for some $i$, so without loss of generality we will assume that $U_1 \neq W_1$. Once again, there are several cases to consider.

\begin{description}
\item[Case 1(a).] If $|U_1| \leq k/2$ and $W_1 \subset U_1$, then choose $X$ to be
   any $k$-subset of $N_1$ that contains one element from $U_1 \setminus W_1$ and 
   all other elements from $N_1 \setminus (U_1 \cup W_1)$ (this is possible since 
   $|N_1 \setminus(U_1 \cup W_1)| \geq k$). Then $d(U,X) = 3$ and $d(W,X) = 1$.
\item[Case 1(b).] If $|U_1| \leq k/2$ and $W_1 \not\subseteq U_1$, let $X$ be a 
   $k$-subset of $N_1 \setminus U_1$ that contains at least one element from 
   $W_1$. In this case, $d(U,X) = 1$ and $d(W,X) = 2$ or $3$.
\end{description}
Clearly, if $|W_1| \leq k/2$ then this case also holds.

\begin{description}
\item[Case 2.] Now we must suppose that $k/2< |U_1| \leq k-1$ and
  $k/2<|W_1| \leq k-1$; note that the lower bound also implies that
  $|U_1| > 3k-n-1$ and $|W_1| > 3k-n-1$.  Without loss of generality, we may
  assume that $|U_1| \geq |W_1|$.  There are three subcases to consider, 
  and in each of these we construct a vertex $X$ with $d(U,X) = 2$ and $d(W,X) =3$.
\begin{itemize}
  \item[(a)] If $|U_1 \cap W_1| = 3k-n-1$, define $X$ to be a $k$-subset containing 
  all of $U_1$ and $k - |U_1|$ elements from $N_1 \setminus (U_1 \cup W_1)$. For 
  such a set $X$ to exist, we need to show that $|N_1 \setminus (U_1 \cup W_1)|$ 
  is sufficiently large. This is straightforward since 
\begin{align*}
|N_1 \setminus (U_1 \cup W_1)| & = (n-k) - (|U_1| + |W_1| - |U_1 \cap W_1|)\\
 & \geq n-k - |U_1| - (k-1) + (3k-n-1)\\
 & = k - |U_1|.
\end{align*}

  \item[(b)] If $|U_1 \cap W_1| < 3k-n-1$, then we can choose $X$ to be a $k$-subset 
  that contains all of $U_1$, $(3k-n-1) - |U_1 \cap W_1|$ elements from $W_1 \setminus U_1$ 
  (this is possible since $|W_1| > 3k-n-1$) and $k - |U_1| -(3k-n-1 -|U_1 \cap W_1|)$
  elements from \mbox{$N_1 \setminus (U_1 \cup W_1)$.}  Again, for such a set $X$ to exist we need 
  to show that $|N_1 \setminus (U_1 \cup W_1)|$ is sufficiently large; this follows because
  \begin{align*}
|N_1 \backslash (U_1 \cup W_1)| 
& =  (n-k) - |U_1| - |W_1| + |U_1 \cap W_1|\\
& \geq  n-k - |U_1| - (k-1) + |U_1 \cap W_1|\\
& =  k - |U_1| - (3k-n-1) + |U_1 \cap W_1|.
\end{align*}

  \item[(c)] If $|U_1 \cap W_1| > 3k-n-1$ then we can set $X$ to be a $k$-subset with 
  $3k-n-1$ elements from $U_1 \cap W_1$, all of $U_1 \setminus W_1$ (this is not empty 
  since $|U_1| \geq |W_1|$) and $k - (3k-n-1) - |U_i \setminus W_i|$ elements from
  $N_1 \setminus (U_1 \cup W_1)$. To show that this last requirement can be met, consider
\begin{align*}
|N_1 \setminus (U_1 \cup W_1)| 
& =  (n-k) - |U_1\setminus W_1| - |W_1| \\
& \geq  n-k - |U_1\setminus W_1| - (k-1) \\
&= k - (3k-n-1) - |U_i \setminus W_i|.
\end{align*}
\end{itemize}
In all cases, we find that $d(U,X) = 2$ and $d(W,X) =3$, and thus $X$ resolves $U$ and $W$.
\end{description}
This completes the proof.
\end{proof}

\section{Resolving sets for Johnson graphs: an algebraic approach} \label{sec:johnson}

\subsection{A matrix method}
\label{sec:incidence matrices}

In this subsection, we introduce a useful technique based on incidence matrices that can be used to show that certain families of $k$-subsets of $[n]$ are resolving sets for the Johnson graph $J(n,k)$.
%

Let $S$ be a subset of $[n]$.  The \emph{incidence vector} of $S$ is the vector $(v_1,\ldots,v_n) \in\RR^n$ whose entries are
\[ v_i = \left\{ \begin{array}{ccl} 1 & \,\, & \textnormal{if $i\in S$,} \\ 0 && \textnormal{otherwise.} \end{array} \right. \]
Now suppose we have a family of subsets (or a \emph{set system}) $\mathcal{S}=\{S_1,\ldots,S_t\}$, where each $S_i$ is a subset of $[n]$ with a fixed cardinality.  Then the \emph{incidence matrix} of $\mathcal{S}$ is the $t\times n$ matrix whose rows are the incidence vectors of $S_1,\ldots,S_t$.

So given any subset of the vertex set of $J(n,k)$, we can write down an incidence matrix for it.  This approach gives a straightforward method of verifying that a given set system is a resolving set for $J(n,k)$, with the following lemma being a straightforward, yet crucial, observation.

\begin{lemma} \label{lemma:matrix}
Let $A$ be the incidence matrix of a set system $S_1,\ldots,S_t$ formed of subsets of $[n]$, and let $\vv=(v_1,\ldots,v_n)$ be the incidence vector of an arbitrary subset $U\subseteq [n]$.  Suppose $\bb=(b_1,\ldots,b_t)$ is the vector obtained as $A\vv=\bb$.  Then, for all $i$, we have
\[ b_i = |S_i \cap U|. \]
\end{lemma}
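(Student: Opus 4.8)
The plan is to unwind the definitions of matrix-vector multiplication and incidence vectors. Fix an index $i \in \{1,\ldots,t\}$. By definition of $\bb = A\vv$, the entry $b_i$ is the dot product of the $i$-th row of $A$ with $\vv$. The $i$-th row of $A$ is, by construction, the incidence vector $(a_{i1},\ldots,a_{in})$ of the set $S_i$, so $b_i = \sum_{j=1}^n a_{ij} v_j$.

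Next I would observe that each term $a_{ij} v_j$ in this sum equals $1$ precisely when both $a_{ij}=1$ and $v_j=1$, and equals $0$ otherwise (since all entries are $0$ or $1$). Now $a_{ij}=1$ means $j \in S_i$ and $v_j=1$ means $j \in U$, so $a_{ij}v_j = 1$ if and only if $j \in S_i \cap U$. Hence the sum $\sum_{j=1}^n a_{ij}v_j$ simply counts the elements of $[n]$ that lie in both $S_i$ and $U$, which is exactly $|S_i \cap U|$. Since $i$ was arbitrary, this gives $b_i = |S_i \cap U|$ for all $i$, as claimed.

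There is essentially no obstacle here: the statement is a direct translation of the bookkeeping built into the definitions of incidence vector, incidence matrix, and matrix multiplication. The only thing to be slightly careful about is making the $\{0,1\}$-nature of the entries explicit, so that the product $a_{ij}v_j$ is genuinely the indicator of $j \in S_i \cap U$ rather than some larger integer; once that is noted, the conclusion is immediate. The proof will therefore be a short paragraph, and its main role in the paper is to set up the later arguments where one analyzes the rank or kernel of $A$ to decide whether $\mathcal{S}$ resolves $J(n,k)$.
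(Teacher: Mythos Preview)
Your proof is correct. The paper itself does not give a proof of this lemma, describing it only as ``a straightforward, yet crucial, observation''; your argument is precisely the routine unwinding of the definitions that justifies this observation, and there is nothing to add.
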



Lemma~\ref{lemma:matrix} gives us an algebraic definition of resolving sets for $J(n,k)$.  Let $\mathcal{S}=\{S_1,\ldots,S_t\}$ be a resolving set for $J(n,k)$.  Since $\mathcal{S}$ is a resolving set for $J(n,k)$, for any two $k$-subsets $U,W$ of $[n]$, there exists some $S_i\in\mathcal{S}$ with $|S_i\cap U| \neq |S_i\cap W|$.  Consequently, we have
\[ (|S_1\cap U|,|S_2\cap U|,\ldots,|S_t\cap U|) = (|S_1\cap W|,|S_2\cap W|,\ldots,|S_t\cap W|) \]
if and only if $U=W$.  Now let $M$ denote the set of incidence vectors of $k$-subsets of $[n]$, and suppose $A$ is the incidence matrix of $\mathcal{S}$.  From Lemma~\ref{lemma:matrix} it follows that for all $\uu,\vv\in M$, we have $A\uu=A\vv$ if and only if $\uu=\vv$.

Now, if the matrix~$A$ represents a linear transformation which is one-to-one, then we are guaranteed that $A\uu=A\vv$ if and only if $\uu=\vv$ for all $\uu,\vv\in\RR^n$, not just incidence vectors.  This leads us to the following result.

\begin{thm} \label{thm:matrix}
Suppose $\mathcal{S}$ is a family of $k$-subsets of $[n]$ whose incidence matrix has rank $n$.  Then $\mathcal{S}$ is a resolving set for the Johnson graph $J(n,k)$.
\end{thm}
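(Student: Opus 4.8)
The plan is to leverage Theorem~\ref{thm:matrix}'s setup directly: I would use Lemma~\ref{lemma:matrix} together with the characterization of resolving sets for $J(n,k)$ provided by Lemma~\ref{carjohnson}. Let $A$ be the incidence matrix of $\mathcal{S}$, which by hypothesis has rank $n$, and hence the linear map $\vv \mapsto A\vv$ from $\RR^n$ to $\RR^t$ is injective. Given two distinct vertices $U,W$ of $J(n,k)$ with incidence vectors $\uu,\vv \in \RR^n$, we have $\uu \neq \vv$, so injectivity of $A$ forces $A\uu \neq A\vv$. By Lemma~\ref{lemma:matrix}, the $i$-th coordinates of these vectors are $|S_i \cap U|$ and $|S_i \cap W|$ respectively, so there must exist some index $i$ with $|S_i \cap U| \neq |S_i \cap W|$; that $S_i$ resolves the pair $U,W$. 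Since $U,W$ were arbitrary, $\mathcal{S}$ is a resolving set.

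The key steps in order are: (1) translate the rank hypothesis into injectivity of the linear transformation represented by $A$ on all of $\RR^n$ --- this is the standard rank--nullity observation; (2) observe that distinct $k$-subsets have distinct incidence vectors, so injectivity applies to them in particular; (3) apply Lemma~\ref{lemma:matrix} to read off $A\uu$ and $A\vv$ coordinatewise as intersection sizes; (4) conclude that some coordinate differs, i.e.\ some $S_i$ resolves $U$ and $W$; (5) invoke the definition (or Lemma~\ref{carjohnson}) to conclude $\mathcal{S}$ is a resolving set. Most of the work has essentially been done in the discussion preceding the theorem statement, so the proof is quite short.

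I do not anticipate a genuine obstacle here --- the argument is essentially a one-line application of injectivity once the dictionary between matrix multiplication and intersection sizes is in place. If anything, the only subtlety worth stating explicitly is that rank $n$ (the number of columns) is exactly the condition for the columns to be linearly independent, equivalently for the null space to be trivial, equivalently for $\vv \mapsto A\vv$ to be injective; this requires $t \geq n$, but that is automatic and need not be flagged. It is also worth remarking that the converse fails: a resolving set need not have incidence matrix of rank $n$, since resolution only needs to distinguish the (finitely many, $0/1$) incidence vectors of $k$-subsets, not all real vectors --- but this is a comment rather than part of the proof.

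\begin{proof}
Let $A$ be the incidence matrix of $\mathcal{S}=\{S_1,\ldots,S_t\}$, so $A$ is a $t\times n$ matrix with $\rank(A)=n$. Since the rank of $A$ equals the number of its columns, the null space of $A$ is trivial, and therefore the linear transformation $\vv\mapsto A\vv$ from $\RR^n$ to $\RR^t$ is injective: for all $\uu,\vv\in\RR^n$, we have $A\uu=A\vv$ if and only if $\uu=\vv$.

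Now let $U$ and $W$ be two distinct $k$-subsets of $[n]$, and let $\uu$ and $\vv$ be their respective incidence vectors. Since $U\neq W$, we have $\uu\neq\vv$, and hence $A\uu\neq A\vv$ by the injectivity established above. By Lemma~\ref{lemma:matrix}, the $i$-th coordinate of $A\uu$ is $|S_i\cap U|$ and the $i$-th coordinate of $A\vv$ is $|S_i\cap W|$. As $A\uu\neq A\vv$, there exists an index $i\in\{1,\ldots,t\}$ with $|S_i\cap U|\neq|S_i\cap W|$. By Equation~\ref{johnsondistance}, this means $d(U,S_i)\neq d(W,S_i)$, so $S_i$ resolves the pair $U,W$. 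Since $U$ and $W$ were arbitrary distinct vertices of $J(n,k)$, it follows that $\mathcal{S}$ is a resolving set for $J(n,k)$.
\end{proof}
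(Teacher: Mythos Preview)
Your proof is correct and follows essentially the same route as the paper: both arguments use the rank hypothesis together with rank--nullity to conclude that $\vv\mapsto A\vv$ is injective on $\RR^n$, then apply this to incidence vectors of $k$-subsets and invoke Lemma~\ref{lemma:matrix} to obtain a resolving $S_i$. Your version is slightly more explicit in writing out the final step (finding the coordinate where $A\uu$ and $A\vv$ differ and appealing to Equation~\ref{johnsondistance}), whereas the paper simply refers back to the discussion preceding the theorem, but the substance is identical.
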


\begin{proof}
Suppose that $|\mathcal{S}|=t$, and that $A$ is the incidence matrix of $\mathcal{S}$.  Since $\rank(A)=n$, it follows that $t\geq n$.  Let $\tau \, : \, \RR^n \to \RR^t$ be the linear transformation represented by the matrix $A$.  By the rank-nullity theorem, $\tau$ is one-to-one.  Thus for all vectors $\uu,\vv\in\RR^n$, we have $A\uu=A\vv$ if and only if $\uu=\vv$.  In particular, this holds for incidence vectors of $k$-subsets, so by the above argument, $\mathcal{S}$ is a resolving set for $J(n,k)$.
\end{proof}

If we happen to have a $n\times n$ incidence matrix with rank $n$, the matrix would have to be invertible.  As a corollary to the above, we show that such an invertible matrix always exists.

\begin{cor} \label{cor:matrix}
For any values of $n$ and $k$, the metric dimension of the Johnson graph $J(n,k)$ is at most $n$.
\end{cor}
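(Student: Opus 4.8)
The plan is to construct, for each $n$ and $k$, an explicit family $\mathcal{S}$ of $k$-subsets of $[n]$ whose incidence matrix is an $n\times n$ invertible matrix, and then invoke Theorem~\ref{thm:matrix}. The natural candidate is the family of ``consecutive blocks'' on a cyclic arrangement of $[n]$: for each $i\in\{1,\ldots,n\}$, take $S_i=\{i,i+1,\ldots,i+k-1\}$ with indices read modulo $n$. The incidence matrix of this family is a circulant matrix $C$ whose first row is the incidence vector of $\{1,\ldots,k\}$, i.e.\ has $1$s in the first $k$ positions and $0$s elsewhere. So the task reduces to showing this particular circulant is nonsingular.

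First I would recall the standard fact that the eigenvalues of an $n\times n$ circulant with first row $(c_0,c_1,\ldots,c_{n-1})$ are $\lambda_j=\sum_{m=0}^{n-1}c_m\omega^{jm}$ for $j=0,\ldots,n-1$, where $\omega=e^{2\pi i/n}$; hence $C$ is invertible precisely when none of these eigenvalues vanishes. In our case $\lambda_j=\sum_{m=0}^{k-1}\omega^{jm}$, which is $k$ when $j=0$, and otherwise equals $\dfrac{\omega^{jk}-1}{\omega^{j}-1}$, a geometric sum. This is zero if and only if $\omega^{jk}=1$ but $\omega^{j}\neq 1$, i.e.\ if and only if $n\mid jk$ but $n\nmid j$.

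Here is the subtlety, and the step I expect to be the main obstacle: for general $n,k$ there may well be a $j$ with $n\mid jk$ and $n\nmid j$ (e.g.\ $n=4$, $k=2$, $j=2$), so the ``consecutive blocks'' circulant is \emph{not} always invertible, and the construction needs a small fix. The clean remedy is to replace $k$ by a value coprime to $n$: choose $k'$ to be any integer with $1\le k'\le n$, $k'\equiv \pm k \pmod{\text{something}}$ — more precisely, it is enough to note that since $J(n,k)\cong J(n,n-k)$ (stated in the preliminaries), a resolving set for $J(n,n-k)$ gives one for $J(n,k)$; and among $k$ and $n-k$, or by a further adjustment, one can arrange a block length coprime to $n$. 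Concretely: if $\gcd(k,n)=1$ we are done immediately; otherwise I would instead use the family $S_i=\{i,i+d,i+2d,\ldots,i+(k-1)d\}\pmod n$ for a suitable step $d$ with $\gcd(d,n)=1$, whose incidence matrix is a permutation of the columns of the good circulant above with $k$ replaced so that the relevant eigenvalue condition $n\nmid jk$ holds — equivalently, reduce to the case $\gcd(k,n)=1$ by relabelling ground-set elements via multiplication by $d$, a bijection of $\mathbb{Z}/n\mathbb{Z}$.

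So the steps, in order, are: (1) set up the circulant incidence matrix of consecutive (or arithmetic-progression) blocks; (2) write down its eigenvalues via the $n$th roots of unity and reduce nonsingularity to the arithmetic condition ``$n\nmid jk$ for all $1\le j\le n-1$'', i.e.\ $\gcd(k,n)=1$; (3) handle the case $\gcd(k,n)>1$ by passing to $J(n,n-k)$ and/or relabelling the ground set by a unit of $\mathbb{Z}/n\mathbb{Z}$ to reduce to a coprime block length; (4) conclude the incidence matrix has rank $n$, so by Theorem~\ref{thm:matrix} the family is a resolving set of size $n$, giving $\beta(J(n,k))\le n$. The main obstacle is genuinely step (3): making sure the coprimality reduction is airtight for all residues, and in particular confirming that one can always find a block length (congruent to $k$ up to relabelling) that is coprime to $n$ — the isomorphism $J(n,k)\cong J(n,n-k)$ plus multiplicative relabelling should suffice, but this needs to be checked rather than asserted.
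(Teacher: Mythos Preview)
Your circulant approach is correct up to and including the identification of the obstacle: the consecutive-block circulant is singular precisely when $\gcd(k,n)>1$. Unfortunately, neither of your proposed remedies in step~(3) actually closes the gap.

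First, passing to $J(n,n-k)$ via the isomorphism does not help, since $\gcd(n-k,n)=\gcd(k,n)$: any common divisor of $n$ and $k$ also divides $n-k$, and conversely. So if one block length is bad, so is the other.

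Second, replacing consecutive blocks by arithmetic progressions $S_i=\{i,i+d,\ldots,i+(k-1)d\}$ with $\gcd(d,n)=1$ yields a circulant whose eigenvalue \emph{set} is identical to that of the consecutive-block circulant. Indeed, $\lambda_j=\sum_{\ell=0}^{k-1}\omega^{jd\ell}$, and since $\omega^d$ is again a primitive $n$th root of unity, as $j$ runs over $\{0,\ldots,n-1\}$ the powers $\omega^{jd}$ run over all $n$th roots of unity, giving exactly the same collection of geometric sums. Equivalently, the AP family is the image of the consecutive family under the ground-set bijection $x\mapsto dx$, so its incidence matrix differs from the original only by row and column permutations and therefore has the same rank. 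Relabelling the ground set can never change $k$, so it cannot ``reduce to the case $\gcd(k,n)=1$''.

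Hence the circulant route, as you have described it, genuinely fails whenever $\gcd(k,n)>1$, and a different family of $k$-sets is required. The paper sidesteps the issue by abandoning circulants altogether: it takes the $k+1$ size-$k$ subsets of $\{1,\ldots,k+1\}$ together with the $n-k-1$ sets $\{1,\ldots,k-1,x\}$ for $x\in\{k+2,\ldots,n\}$. The resulting incidence matrix is block lower-triangular with diagonal blocks $J_{k+1}-I_{k+1}$ and $I_{n-k-1}$, so its determinant is $(-1)^k k\neq 0$ for all $n$ and $k$, and Theorem~\ref{thm:matrix} applies directly.
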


\begin{proof}
To show this, we just need to exhibit a set system of size $n$ with an invertible incidence matrix, which we shall construct.  
As is usual, $I_n$ denotes the $n\times n$ identity matrix, and $J_n$ denotes the $n\times n$ matrix with all entries equal to 1.  Then let $A$ be the following $n\times n$ matrix:
\[ A = \left[ \begin{array}{c|c}
                &   \\
J_{k+1}-I_{k+1} & 0 \\
                &   \\ \hline
                &   \\
              B & I_{n-k-1} \\
                &
\end{array} \right], \,\,\, \textrm{ where } \,
 B = \begin{bmatrix} 1 & 1 & \cdots & 1 & 0 & 0 \\
                       1 & 1 & \cdots & 1 & 0 & 0 \\
                       \vdots & \vdots & \vdots & \vdots & \vdots & \vdots  \\
                       1 & 1 & \cdots & 1 & 0 & 0 \end{bmatrix}. \]
Clearly the rows of $A$ are 0-1 vectors of weight $k$.  Also, this matrix is clearly invertible, as 
its determinant is
\[ \det(A) = \det(J_{k-1}-I_{k-1}) \det(I_{n-k-1}) = (-1)^{k}k, \]
which is obviously not zero.

The set system which corresponds to this matrix is then
\[  { \{1,\ldots,k+1\} \choose k } \cup \left\{ \{1,\ldots,k-1,x\} \, : \, x\in \{k+2,\ldots,n\} \right\}. \]
\end{proof}

As an example, the following is a resolving set for $J(9,3)$ of size~$9$:
\[ \{1,2,3\}, \{1,2,4\}, \{1,2,5\}, \{1,2,6\}, \{1,2,7\}, \{1,2,8\}, \{1,2,9\}, \{1,3,4\}, \{2,3,4\}. \]

We remark that this approach has also been adapted for the Grassmann graphs: see~\cite[Theorem 5]{grassmann} for details.

\subsection{Symmetric designs}
\label{sec:symmetric designs}

We can use the approach developed in the previous subsection to demonstrate that a particularly interesting class of set systems provides resolving sets for $J(n,k)$ of size~$n$.

A \emph{$2$-design} with parameters $(n,k,\lambda)$ is a pair $(X,\mathcal{B})$, where $X$ is a set of $n$ \emph{points}, and $\mathcal{B}$ is a family of $k$-subsets of $X$, called \emph{blocks}, such that any pair of distinct points are contained in exactly $\lambda$ blocks. The \emph{incidence matrix} of a 2-design is the 0-1 matrix with rows indexed by the points and columns indexed by the blocks of the 2-design, where the $(p,B)$ entry is 1 if the point $p$ is in the block $B$ and $0$ otherwise.

A well-known result is Fisher's inequality (see~\cite[Theorem 1.9]{Lander83}), which asserts that the number of blocks is at least the number of points~$n$.  If the number of blocks is in fact equal to $n$, we have a \emph{symmetric design}.  If we have a symmetric design with parameters $(n,k,\lambda)$ and incidence matrix $A$, then $A^T$ must also be the incidence matrix of a symmetric design with those parameters.  Consequently, in a symmetric design, any pair of distinct blocks must intersect in exactly $\lambda$ points.  A table listing families of symmetric designs can be found in~\cite[Section II.6.9]{handbook}.

Incidence matrices are a powerful tool in the study of symmetric designs (see \cite{Lander83}, for instance).  The most well-known existence result for symmetric designs, the Bruck--Ryser--Chowla theorem (which gives strong necessary conditions for their existence: see~\cite[Theorem 2.1]{Lander83}), is obtained using them.  For our purpose, we can use incidence matrices to show the following.

\begin{thm} \label{thm:symmetricdesign}
The blocks of a symmetric design $\mathcal{D}$ with parameters $(n,k,\lambda)$ form a resolving set for $J(n,k)$.
\end{thm}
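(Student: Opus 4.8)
The plan is to apply Theorem~\ref{thm:matrix}: it suffices to show that the incidence matrix of the blocks of a symmetric $(n,k,\lambda)$-design, viewed as a family of $k$-subsets of the point set $[n]$, has rank~$n$ over $\RR$. Since a symmetric design has exactly $n$ blocks, its incidence matrix $A$ is square of size $n\times n$, so it is enough to prove that $A$ is invertible, i.e.\ that $\det(A)\neq 0$.

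First I would recall the standard identity for the incidence matrix of a symmetric design. Counting incidences shows that each block has $k$ points and, because $A^T$ is also the incidence matrix of a symmetric design with the same parameters (as noted in the paragraph preceding the theorem), each point lies in $k$ blocks and any two distinct blocks meet in exactly $\lambda$ points. Translating the latter two facts into matrix form gives
\[
A A^T = (k-\lambda) I_n + \lambda J_n ,
\]
where $I_n$ and $J_n$ are as defined in the proof of Corollary~\ref{cor:matrix}. The next step is to compute $\det(AA^T)$. Since $J_n$ has eigenvalue $n$ once (eigenvector the all-ones vector) and $0$ with multiplicity $n-1$, the matrix $(k-\lambda)I_n + \lambda J_n$ has eigenvalues $k-\lambda+\lambda n = k + \lambda(n-1)$ once and $k-\lambda$ with multiplicity $n-1$; hence
\[
\det(A)^2 = \det(AA^T) = \bigl(k+\lambda(n-1)\bigr)(k-\lambda)^{n-1}.
\]
Finally I would argue that this quantity is nonzero: the relation $\lambda(n-1) = k(k-1)$ (obtained by double-counting flags through a fixed point) gives $k+\lambda(n-1) = k^2$, which is positive, and $k-\lambda > 0$ since a symmetric design is nontrivial (each block is a proper, nonempty subset, so $0 < \lambda < k$; alternatively $k-\lambda = k - k(k-1)/(n-1) > 0$ as $n > k$). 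Therefore $\det(A)^2 = k^2 (k-\lambda)^{n-1} \neq 0$, so $\rank(A) = n$, and Theorem~\ref{thm:matrix} yields the conclusion.

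I do not expect a serious obstacle here; the only point needing a little care is justifying $k - \lambda \neq 0$ and $k + \lambda(n-1)\neq 0$ cleanly from the design axioms rather than ad hoc, which is handled by the flag-counting identities above. One should also state explicitly that the rows of $A$ being $k$-subsets of an $n$-set is exactly the hypothesis of Theorem~\ref{thm:matrix}, so no further verification that $\mathcal{D}$'s blocks are genuine vertices of $J(n,k)$ is required.
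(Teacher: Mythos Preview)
Your proposal is correct and follows essentially the same route as the paper: show that the $n\times n$ incidence matrix of the symmetric design is invertible and then invoke Theorem~\ref{thm:matrix}. The only difference is cosmetic---the paper cites the formula $|\det(A)| = k\sqrt{(k-\lambda)^{n-1}}$ from \cite{Lander83} and dismisses $k=\lambda$ as trivial, whereas you derive it yourself via $AA^T = (k-\lambda)I_n + \lambda J_n$ and the identity $\lambda(n-1)=k(k-1)$.
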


\begin{proof}
Suppose $A$ is the incidence matrix of $\mathcal{D}$.  By \cite[Proposition 1.2]{Lander83}, we have $|\det(A)| = k\sqrt{(k-\lambda)^{n-1}}$,
and this equals 0 if and only if $\lambda=k$.  However, in a symmetric design this can only happen if $n=k$ (see~\cite[Proposition~1.1]{Lander83}), which is trivial.

Hence $\mathcal{D}$ has an invertible incidence matrix, so by Theorem~\ref{thm:matrix} is a resolving set for $J(n,k)$.
\end{proof}

Three particular classes of symmetric designs are worth mentioning here.  First, symmetric designs with $\lambda=1$ are precisely the \emph{finite projective planes} \cite{HughesPiper73}.  For these to exist, we must have $n=q^2+q+1$ and $k=q+1$ for some positive integer $q$, which is called the \emph{order} of the projective plane.  Projective planes are known to exist for any prime-power order, and it is conjectured that these are the only orders possible.  The most famous example is the \emph{Fano plane} 
which is a symmetric design with parameters $(7,3,1)$, and thus can be used as a resolving set for the Johnson graph $J(7,3)$.  In Subsection~\ref{sec:partial geometries}, we shall see that (with the exception of the Fano plane) projective planes do not give resolving sets for Kneser graphs.

%
Symmetric designs with $\lambda=2$ are known as \emph{biplanes} \cite{Cameron73}.  For a biplane to exist, we must have $n={k \choose 2}+1$.  Unlike the case of projective planes, there are no known infinite families of biplanes.  In fact only 16 examples are known (see \cite{Kaski08}), the largest having $n=79$ points and blocks of size $k=13$.

Another important class of symmetric designs are those arising from Hadamard matrices, which will be discussed in subsection~\ref{sec:odd graphs} below.

\section{Resolving sets for Kneser graphs: combinatorial and \\geometric approaches} \label{sec:kneser}

In this section we discuss a number approaches to the construction of resolving sets for various classes of Kneser graphs.  These constructions, which may appear on the surface to be something of a ``mixed bag'', demonstrate the variety of techniques which may be employed.  Our constructions are inspired by finite and discrete geometry, as well as combinatorial design theory. We also discuss the implications of the algebraic techniques from the previous section for Kneser graphs.

\subsection{Partial geometries}
\label{sec:partial geometries}

A \emph{partial geometry} with parameters $(s,t,\alpha)$, or $pg(s,t,\alpha)$, is a pair $(\mathcal{P},\mathcal{L})$, consisting of a set of \emph{points} $\mathcal{P}$ and a set of \emph{lines} $\mathcal{L}$, satisfying the following conditions:
\begin{itemize}
\item[(i)] any line is incident with $s+1$ points, and the intersection of any two lines is at most a single point;
\item[(ii)] any point is incident with $t+1$ lines, and any two points with at most one line;
\item[(iii)] if the point $p$ and the line $L$ are not incident, then exactly $\alpha$ points of $L$ are collinear with $p$ (and so also exactly $\alpha$ lines incident with $p$ are concurrent with $L$).
\end{itemize}

This is a very general geometric structure, with many well-known objects (including projective and affine planes, generalized quadrangles, etc.) occurring as special cases.  (For additional background material about partial geometries, see~\cite{batten}).  
We remark that given any partial geometry $pg(s,t,\alpha)$, its \emph{dual} is a partial geometry $pg(t,s,\alpha)$, obtained by interchanging the roles of points and lines. Also, in a partial geometry $pg(s,t,\alpha)$, the number of points $v$ and the number of lines $b$ are given by 
\[ v= \frac{(s + 1)(st + \alpha)}{\alpha} \quad \textnormal{and} \quad b=\frac{(t + 1) (st + \alpha)}{\alpha}. \]
Our main result in this subsection, where we use partial geometries to obtain resolving sets for Kneser graphs, is as follows.

\begin{thm}\label{t>s}
Let $\Gamma$ be a partial geometry $pg(s,t,\alpha)$ with point set $\mathcal{P}$ and line set $\mathcal{L}$, and where $t>s$ . Then $\mathcal{L}$ is a resolving set for the Kneser graph $K(v,s+1)$.
\end{thm}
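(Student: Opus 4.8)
The plan is to prove directly that any two distinct vertices $U,W$ of $K(v,s+1)$ — that is, any two distinct $(s+1)$-subsets of the point set $\mathcal{P}$ — are resolved by some line $L\in\mathcal{L}$. (First note that, by condition~(i), two distinct lines meet in at most one point and hence, since $s+1\geq 2$, are distinct as point sets, so $\mathcal{L}$ is genuinely a subset of $V(K(v,s+1))$; and we work under the standing convention that $K(v,s+1)$ is connected, so distances are finite.) The only fact about Kneser distances that I need is that two disjoint vertices are adjacent, hence at distance~$1$; consequently, if a line $L$ misses $W$ but meets $U$, then $d(W,L)=1$ while $d(U,L)\neq 1$ (it equals~$0$ when $U=L$ and is at least~$2$ otherwise), so $L$ resolves the pair. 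It therefore suffices to produce, for each pair $U\neq W$, a line meeting exactly one of the two sets.

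The crux is a short counting step, and this is precisely where the hypothesis $t>s$ enters. Since $|U|=|W|=s+1$ and $U\neq W$, the set $U\setminus W$ is non-empty; fix a point $p\in U\setminus W$. By condition~(ii), $p$ lies on exactly $t+1$ lines, while at most one line through $p$ passes through any given point of $W$ (none of which is $p$); hence at most $|W|=s+1$ of the $t+1$ lines through $p$ meet $W$. Since $t+1>s+1$, there is a line $L$ through $p$ with $L\cap W=\emptyset$. As $p\in U\cap L$, this $L$ meets $U$ and misses $W$, so $L$ resolves $U$ and $W$ by the observation above. Thus $\mathcal{L}$ is a resolving set for $K(v,s+1)$.

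I do not expect a genuine obstacle; the steps requiring the most care are small. One must pick the witness point $p$ in $U\setminus W$ rather than merely in $U$, so that all $s+1$ points of $W$ count as points ``other than $p$'' and the bound $s+1$ on the relevant lines is valid. One should also confirm that a line missing $W$ but meeting $U$ really does resolve the pair — this uses only the disjoint-implies-distance-$1$ fact, with no assumption on the diameter — and that such a line cannot equal $W$ (it cannot, since $|W|=s+1\geq 2$). For context, one can check from $v=(s+1)(st+\alpha)/\alpha$ together with $\alpha\leq s+1$ and $t>s$ that $K(v,s+1)$ actually has diameter~$2$ apart from one degenerate case, but the argument above does not use this.
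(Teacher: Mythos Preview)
Your proof is correct and follows essentially the same approach as the paper: pick $p\in U\setminus W$, use the pigeonhole count (at most $s+1$ of the $t+1$ lines through $p$ can meet $W$) to find a line through $p$ disjoint from $W$, and conclude that this line resolves the pair via adjacency. Your write-up is somewhat more explicit about the counting and the edge cases, but the argument is the same.
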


\begin{proof}
Let $\Gamma$ be the partial geometry $pg(s,t,\alpha)$ given by the set of lines $\mathcal{L}=\{L_1,\ldots,L_{b}\}$ over the set of
points $\mathcal{P}=\{1,\ldots,v\}$. Note that the lines can be viewed as vertices of the Kneser graph $K(v,s+1)$. Consider two distinct vertices
$U,W\in V(K(v,s+1))$, a point $p\in \mathcal{P}$ such that $p\in U\setminus W$ and the $t+1$ lines incident with $p$. Since any two of these lines only intersect in $p$  and $t>s$, there exists a line $L_i\in\mathcal{L}$ containing $p$ and not intersecting $W$.  Thus $d(L_i,U)>1$ and $d(L_i,W)=1$, and so $L_i$ resolves $U$ and $W$. Hence $\mathcal{L}$ is a resolving set for $K(v,s+1)$.
\end{proof}

Note that, by Lemma~\ref{imp}, the partial geometries of Theorem~\ref{t>s} are also resolving sets for the Johnson graphs $J(v,s+1)$.

Partial geometries where $\alpha=s+1=t$ are affine planes of order $s+1$. Since affine planes of order $q$ are known to exist whenever $q$ is a prime power (see~\cite{batten}), we have the following upper bound for the metric dimension of $K(q^2,q)$ for prime powers~$q$.

\begin{cor}
\label{affine}
If $q\geq 3$ is a prime power, then $\beta (K(q^2,q))\leq q^2+q$.
\end{cor}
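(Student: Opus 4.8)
The strategy is to realize $K(q^2, q)$ as a Kneser graph to which Theorem~\ref{t>s} applies, by exhibiting an affine plane of order $q$ as a partial geometry with the right parameters. First I would recall that an affine plane of order $q$ is precisely a partial geometry $pg(s,t,\alpha)$ with $s = q-1$, $t = q$, and $\alpha = s+1 = q$: every line has $s+1 = q$ points, every point lies on $t+1 = q+1$ lines, and for a non-incident point-line pair $(p, L)$, exactly $\alpha = q$ of the $q$ points of $L$ are collinear with $p$ (equivalently, exactly one point of $L$, the intersection with the parallel line through $p$, is \emph{not} collinear with $p$). Since $q \geq 3$ is a prime power, such an affine plane exists by the standard construction over $\mathbb{F}_q$.

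Next I would check the hypothesis $t > s$ of Theorem~\ref{t>s}: here $t = q > q - 1 = s$, so the theorem applies. It then tells us that the line set $\mathcal{L}$ of the affine plane is a resolving set for $K(v, s+1) = K(v, q)$, where $v$ is the number of points. The number of points in $pg(s,t,\alpha)$ is $v = \frac{(s+1)(st+\alpha)}{\alpha} = \frac{q((q-1)q + q)}{q} = (q-1)q + q = q^2$, which is of course just the familiar count of points in an affine plane. So $\mathcal{L}$ is a resolving set for $K(q^2, q)$, and consequently $\beta(K(q^2,q)) \leq |\mathcal{L}|$.

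Finally I would count the lines: the number of lines in $pg(s,t,\alpha)$ is $b = \frac{(t+1)(st+\alpha)}{\alpha} = \frac{(q+1)((q-1)q+q)}{q} = (q+1) \cdot q = q^2 + q$. (Equivalently, an affine plane of order $q$ has $q+1$ parallel classes each of size $q$.) Combining, $\beta(K(q^2,q)) \leq q^2 + q$, as claimed.

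**Main obstacle.** There is no real obstacle: the entire content is bookkeeping with the partial-geometry parameters, checking that an affine plane is indeed a $pg(q-1, q, q)$ and that $t > s$ holds. The only point requiring a moment's care is verifying axiom (iii) for affine planes — that a point $p$ off a line $L$ is collinear with exactly $q$ (rather than all $q$) points of $L$ — which follows because the unique line through $p$ parallel to $L$ meets $L$ in no point, so $p$ is joined to each of the $q$ points of $L$ by a (distinct) line, giving $\alpha = q$; one must merely confirm this matches $\alpha = s+1$. Everything else is a direct substitution into the formulas for $v$ and $b$ quoted before Theorem~\ref{t>s}.
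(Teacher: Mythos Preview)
Your proof is correct and follows exactly the paper's approach: apply Theorem~\ref{t>s} with $s=q-1$, $t=q$, $\alpha=q$, and compute $v=q^2$ and $b=q^2+q$. One small slip worth fixing: your parenthetical claim that ``exactly one point of $L$, the intersection with the parallel line through $p$, is not collinear with $p$'' is wrong---in an affine plane any two points are collinear, so all $q$ points of $L$ are collinear with $p$ (which is precisely why $\alpha=q=s+1$); you were presumably thinking of the dual form of axiom~(iii), where indeed exactly one of the $q+1$ lines through $p$ (the parallel to $L$) fails to meet $L$, leaving $\alpha=q$ that do. Your later paragraph gets this right.
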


\begin{proof}
Apply Theorem~\ref{t>s} for values $s=q-1$, $t=q$ and $\alpha=q$, noting that $v=q^2$.
\end{proof}

When $\alpha=1$, we obtain the so-called \emph{generalized quadrangles} (see for instance \cite{paynethas}). Their existence is known for many values of $(s,t)$, including the classical ones: $(q-1,q+1)$, $(q,q^2)$ and $(q^2,q^3)$ for a prime power $q$.  Thus, Theorem~\ref{t>s} gives upper bounds on the metric dimension of some further families of Kneser graphs, such as the following ones.
\begin{cor}
\label{gq}
If $q$ is a prime power, then:
\begin{itemize}
\item[{\rm (i)}] $\beta (K(q^3,q))\leq q^2(q+2)$;
\item[{\rm (ii)}] $\beta (K((q+1)(q^3+1),q+1))\leq (q^2+1)(q^3+1)$;
\item[{\rm (iii)}] $\beta (K((q^2+1)(q^5+1),q^2+1))\leq (q^3+1)(q^5+1)$.
\end{itemize}
\end{cor}

A partial geometry $pg(q,q,q+1)$ with $q\geq 2$ is a projective plane of order $q$. Since affine planes are resolving sets for an infinite family of Kneser graphs, it prompts the question of whether projective planes are also.  However, the next result shows that the answer to this question is negative.

\begin{prop} \label{th:notproj}
Given a projective plane of order $q>2$, the set $\mathcal{L}$ of lines does not resolve the Kneser graph $K(q^2+q+1,q+1)$.
\end{prop}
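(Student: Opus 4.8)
The plan is to exhibit two distinct vertices $U,W$ of the Kneser graph $K(q^2+q+1,q+1)$ that are \emph{not} resolved by any line of the projective plane $\Pi$ of order $q$; that is, for every line $L$ we must have $d(L,U)=d(L,W)$. Since $n=q^2+q+1$ and $k=q+1$, we have $3k-1 = 3q+2 \leq q^2+q+1$ precisely when $q\geq 2$, so (for $q>2$) the Kneser graph has diameter $2$, and distances take only the values $0,1,2$. Thus a line $L$ fails to resolve $U,W$ if and only if: $L\notin\{U,W\}$, and $L$ is disjoint from neither or from both of $U,W$ --- equivalently, $L\cap U = \emptyset \iff L\cap W = \emptyset$.

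The natural candidates for $U,W$ are themselves lines of $\Pi$, or complements of lines, or similar highly symmetric configurations. The key structural fact I would use is that in a projective plane any two distinct lines meet in exactly one point, so every line of $\Pi$ intersects every other line; hence no line of $\Pi$ is disjoint from any line of $\Pi$. This suggests taking $U$ and $W$ to be two specific $(q+1)$-subsets that are \emph{not} lines of $\Pi$ but that have the same ``disjointness pattern'' with respect to $\mathcal{L}$. A cleaner route: fix a line $\ell_0$ of $\Pi$ and a point $p\notin\ell_0$. Consider the $q+1$ lines through $p$; each meets $\ell_0$ in exactly one point, and these $q+1$ intersection points exhaust $\ell_0$. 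Now one can try to build $U,W$ as two different ``transversal-like'' sets of size $q+1$ --- for instance, replace one point of $\ell_0$ by another point lying on the same line through $p$ --- and check that swapping stays invisible to all lines. I would first carry out a small computation to pin down exactly which swap works.

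The steps, in order, would be: (1) record that $K(q^2+q+1,q+1)$ has diameter $2$ for $q>2$, so resolving reduces to distinguishing the set $\{X : X\cap U=\emptyset\}$ from $\{X : X\cap W=\emptyset\}$ among $X\in\mathcal{L}$; (2) observe that a $(q+1)$-subset $Y$ is disjoint from a line $L$ iff $Y$ is contained in the complement of $L$, which is the point set of the affine plane $AG(2,q)$ obtained by deleting $L$; (3) using this, translate ``$L$ does not resolve $U,W$'' into a condition purely about which lines' complements contain $U$ versus $W$; (4) construct explicit $U,W$ --- most likely by taking a line $\ell$ of $\Pi$, deleting a point $x\in\ell$, and adding back a carefully chosen point $y\notin\ell$, with $W$ obtained from the same $\ell$ by a different such modification (or $U=\ell$ itself will fail because $U\in\mathcal{L}$, so both must be non-lines) --- and verify the disjointness patterns agree; (5) confirm $U\neq W$ and that neither equals any line, so they are genuinely two vertices neither of which lies in $\mathcal{L}$.

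The main obstacle I anticipate is step (4): finding $U$ and $W$ whose disjointness patterns coincide for \emph{all} $q^2+q+1$ lines simultaneously, not merely for most of them. Two $(q+1)$-sets disjoint from a given line $L$ must both avoid all $q+1$ points of $L$; for the ``hard'' lines --- those meeting $U\triangle W$ --- one needs the two symmetric differences $U\setminus W$ and $W\setminus U$ to interact with each such line in a way that preserves emptiness of the intersection, and this is a global constraint. I expect the resolution is that $U\setminus W$ and $W\setminus U$ can be chosen to be single points lying on a common line $m$ of $\Pi$, together with the observation that every line of $\Pi$ either misses all of $U\cup W$ or meets $U\cap W$ --- the latter forcing the intersection with both $U$ and $W$ to be nonempty regardless. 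Making that dichotomy genuinely exhaustive, using the ``any two lines meet'' axiom and a counting of how the $q+1$ points of an arbitrary line distribute among $U\cap W$, $U\setminus W$, $W\setminus U$, and the outside, is the crux of the argument.
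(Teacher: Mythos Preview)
Your overall plan---reduce to the diameter-$2$ regime so that a line $X$ resolves $(U,W)$ iff it is disjoint from exactly one of them, then exhibit two $(q+1)$-sets with $|U\setminus W|=|W\setminus U|=1$ whose disjointness patterns against $\mathcal{L}$ coincide---is precisely the shape of the paper's argument. The gap is in step~(4): you never pin down a construction that works, and the dichotomy you propose (``every line either misses $U\cup W$ or meets $U\cap W$'') cannot be exhaustive as stated, since the line $m$ carrying the two symmetric-difference points meets $U\cup W$ without meeting $U\cap W$. You need a third case for $m$ itself, and more importantly you need $U\cap W$ chosen so that every line through either symmetric-difference point, other than $m$, is forced to hit $U\cap W$. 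Your ``delete a point of $\ell$, add a point off $\ell$'' suggestion does not by itself guarantee this.

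The paper's construction supplies exactly that missing idea. Fix a line $L$ and two points $p,p'\in L$. List the remaining lines through $p$ as $L_1,\dots,L_q$ and those through $p'$ as $L'_1,\dots,L'_q$, and set $p_i=L_i\cap L'_i$. Take $U=\{p,p_1,\dots,p_q\}$ and $W=\{p',p_1,\dots,p_q\}$. A line $X$ resolving $(U,W)$ would have to miss all the $p_i$ and contain exactly one of $p,p'$; but every line through $p$ other than $L$ is some $L_i$ and so contains $p_i$, while $L$ contains both $p$ and $p'$. Hence no line resolves the pair. The point is that $U\cap W=\{p_1,\dots,p_q\}$ is engineered so that the $q$ lines through $p$ other than $L$ each hit it, and likewise for $p'$---this is the construction your outline gestures at but does not give.

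(For what it is worth, one version of your ``modify a line'' idea can be made to work: with $x\in\ell$ fixed and $y,y'$ two distinct points on a single line $m$ through $x$, the pair $U=(\ell\setminus\{x\})\cup\{y\}$, $W=(\ell\setminus\{x\})\cup\{y'\}$ is unresolved by $\mathcal{L}$. But the variant with $x\neq x'$ and $y=y'$ fails, so the vague ``carefully chosen'' in your step~(4) is doing all the work.)
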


\begin{proof}
A projective plane of order $q>2$ has $v=q^2+q+1$ points and every line contains exactly $q+1$ points, so we are dealing with Kneser graphs $K(q^2+q+1,q+1)$. Clearly, the diameter of $K(q^2+q+1,q+1)$ is two: since $q>2$, we have $q^2+q+1 \geq 3q+2$.  Consider a line $L\in \mathcal{L}$ and two points $p,p'\in L$.  In a projective plane, there exist exactly $q+1$ distinct lines incident with $p$, say $\{L,L_1,...,L_q\}$, and exactly $q+1$ distinct lines incident with $p'$, say $\{L,L'_1,...,L'_q\}$.  Also, any two lines intersect, and so we can consider the set of points
$\{p_i=L_i\cap L'_i \, : \, i=1,\ldots,q\}$.  Note that these must all be distinct.

We will show that the vertices $U=\{p,p_1,...,p_q\}$ and $W=\{p',p_1,...,p_q\}$ are not resolved by any line of $\mathcal{L}$. Indeed, since the diameter of $K(q^2+q+1,q+1)$ is two, any line $X$ resolving $U$ and $W$ should intersect only one of these two vertices.  
Thus, $X$ must be disjoint from $\{p_1,...,p_q\}$, and contains only one of $p$ and $p'$.  So $L$ cannot resolve $U$ and $W$, and every line other than $L$ incident with either $p$ or $p'$ also intersects $\{p_1,...,p_q\}$. This proves that $\mathcal{L}$ is not a resolving set for $K(q^2+q+1,q+1)$.
\end{proof}

We remark that the above proof excludes the case of $q=2$, where the unique projective plane is the Fano plane.  It transpires that the Fano plane actually does give a resolving set for the Odd graph $K(7,3)$: this is discussed in the following subsection.

 \subsection{Odd graphs and Hadamard matrices}
\label{sec:odd graphs}

In Section~\ref{sec:Introduction}, we saw that for the Odd graph~$O_{k+1}$ (i.e.\ the Kneser graph $K(2k+1,k)$), any resolving set for the corresponding Johnson graph $J(2k+1,k)$ will also resolve~$O_{k+1}$.  Consequently, the results we obtained in the previous section can be applied directly to Odd graphs.  In particular, Corollary~\ref{cor:matrix} (using incidence matrices) implies that \mbox{$\beta(J(2k+1,k))=$} $\beta(O_{k+1})\leq 2k+1$, while Theorem~\ref{th:[n]} (using our ``partitioning'' construction) yields \mbox{$\beta(J(2k+1,k))=$} $\beta(O_{k+1})\leq 2k$. 

While incidence matrices give a (slightly) weaker bound here, there is however an interesting class of symmetric designs which can be used here.  A \emph{Hadamard matrix} is an $n\times n$ square matrix $H$ with entries $\pm 1$ and the property that $HH^T = nI_n$.  For such a matrix to exist, we must have $n=1$, $n=2$ or $n$ being a multiple of 4; it is conjectured that they exist for all such values, with the smallest size for which existence is unknown being $n=668$ (see~\cite{Hadamard}).  Any Hadamard matrix may be \emph{normalized} so that the first row and column have all entries~$+1$.  Given a normalized $4m\times 4m$ Hadamard matrix, by deleting the first row and column and replacing the entries $-1$ with $0$, one obtains the incidence matrix of a symmetric design with parameters $(4m-1,2m-1,m-1)$ (see~\cite[Section 1.2]{Lander83}), called a \emph{Hadamard design}.  Note that for $m=2$, the unique Hadamard design is the Fano plane, while for $m=3$, we obtain the unique biplane on 11 points.

In particular, where $k=2m-1$ and there exists a $4m\times 4m$ Hadamard matrix, Theorem~\ref{thm:symmetricdesign} shows that we can use a Hadamard design as a resolving set for \mbox{$J(2k+1,k)$.}  By the observation above, such a design may also be used as a resolving set for the Odd graph $O_{k+1}$.  As an example, the Fano plane is a resolving set for $K(7,3)=O_4$.

\subsection{Steiner systems}
\label{sec:steiner systems}

The Fano plane, which as we have seen is a resolving set for $K(7,3)$, is an example of an important class of combinatorial objects known as Steiner systems.  In this subsection, we show that these objects may be used as resolving sets for Kneser graphs more widely.

Let $n,k,t,\lambda$ be integers with $n> k > t >1$.  A {\em $t$-$(n,k,\lambda)$ design} (or a {\em $t$-design}) is a pair $(X,\mathcal{B})$, where $X$ is a set of $n$ \emph{points}, and $\mathcal{B}$ is a family of $k$-subsets of $X$, called \emph{blocks}, such that any $t$-subset of distinct points are contained in exactly $\lambda$ blocks.  From the definition, it follows that the number of blocks in a $t$-design is necessarily
\begin{equation} \label{blocksnumber}
\lambda \binom{n}{t} / \binom{k}{t}.
\end{equation} 
Usually we take $X=[n]$.  For $t=2$, we recover the definition of 2-designs from Section~\ref{sec:symmetric designs}.  A {\em Steiner system}~$S(t,k,n)$ is a $t$-design with $\lambda=1$, i.e.~any $t$-subset of $[n]$ is contained in exactly one block.  (See~\cite[Section II.5]{handbook} for more background on Steiner systems.)

Some important subclasses of Steiner systems are as follows: projective planes of order $q$, which are Steiner systems $S(2,q+1,q^2+q+1)$; affine planes of order $q$, which are Steiner systems $S(2,q,q^2)$; \emph{Steiner triple systems}, denoted $\STS(n)$, which are Steiner systems $S(2,3,n)$; and {\em Steiner quadruple systems}, denoted $\mathrm{SQS}(n)$, which are Steiner systems $S(3,4,n)$.  We will be interested in Steiner systems $S(k-1,k,n)$.

It is straightforward to show that for a Steiner triple system to exist, we must have $n\equiv 1,3 \pmod 6$; we call $n$ the {\em order} of the Steiner triple system.  The number of blocks in an $\STS(n)$ is $n(n-1)/6$.  In 1847, Kirkman~\cite{Kirkman} showed that Steiner triple systems exist for all admissible values of $n\geq 7$.  The unique $\STS(7)$ is the Fano plane.  Also, it is known that Steiner quadruple systems exist if and only if $n\equiv 2,4 \pmod 6$ (see~\cite[Theorem II.5.24]{handbook}).  Unfortunately, no existence result is known for Steiner systems in general; a table of parameters of known Steiner systems can be found in~\cite[Table II.5.17]{handbook}.  Very few Steiner systems are known for $k\geq 5$.

Given a Steiner system $S(k-1,k,n)$ and an $(k-1)$-subset of $[n]$, its \emph{completion} is defined to be the unique block in the system that contains that subset.  For example, in a Steiner triple system $\STS(n)$, one can complete any pair of elements to a unique block.



The main result of this subsection is as follows.
\begin{thm}
\label{thm:Steiner systems}
Suppose there exists a Steiner system $S(k-1,k,n)$, where $n\geq 4k-2$.  Then its blocks form a resolving set for $K(n,k)$, and consequently
\[ \beta(K(n,k))\leq \frac{1}{k} \binom{n}{k-1}. \]
\end{thm}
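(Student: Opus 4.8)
The plan is to use the diameter-2 criterion for resolving: since $n \geq 4k-2 \geq 3k-1$, the Kneser graph $K(n,k)$ has diameter~2, so a vertex $X$ resolves a pair $U,W$ precisely when $X$ is disjoint from exactly one of $U$ and $W$. Hence it suffices to show that for any two distinct $k$-subsets $U,W$ of $[n]$, either one of them is a block of the Steiner system $S(k-1,k,n)$, or there is a block disjoint from exactly one of $U,W$. The size bound then follows immediately from Equation~\ref{blocksnumber} with $t=k-1$, $\lambda=1$, which gives $\binom{n}{k-1}/\binom{k}{k-1} = \frac{1}{k}\binom{n}{k-1}$ blocks.

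First I would fix distinct $U,W$ and pick an element $p \in U \setminus W$ together with a $(k-2)$-subset $T \subseteq [n] \setminus (U \cup W)$; such a $T$ exists because $|U \cup W| \leq 2k-1$ and $n \geq 4k-2$, so $|[n]\setminus(U\cup W)| \geq 2k-1 \geq k-2$. Then $T \cup \{p\}$ is a $(k-1)$-subset of $[n]$, and its completion $B$ in the Steiner system is a block containing $p$, hence $B \cap U \neq \emptyset$. The block $B$ consists of $T \cup \{p\}$ together with one further point $x$; the only way $B$ could meet $W$ is if $x \in W$. So either $B \cap W = \emptyset$ and we are done (as $B$ meets $U$ but not $W$), or $x \in W$ and we need a different argument.

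To handle the bad case $x \in W \setminus U$ (note $x \notin U$ since $x \notin T$ and $x \neq p$; and if $x \in W \cap U$ we would already be in a position to use a cleaner choice), I would rechoose the $(k-2)$-set inside $[n]\setminus(U\cup W)$ to avoid this: since $|[n]\setminus(U\cup W)| \geq 2k-1 > k-2$, there is room to vary $T$, and only finitely many completions can be "spoiled." More carefully, for each choice of $(k-2)$-subset $T \subseteq [n]\setminus(U\cup W)$ the completion of $T \cup \{p\}$ is determined, and its extra point $x$ could lie in $W$; but if this happened for \emph{every} such $T$, one gets a contradiction with the Steiner property ($\lambda = 1$ forces distinct $(k-1)$-sets through $p$ to have distinct completions, and there are more available $(k-2)$-subsets of $[n]\setminus(U\cup W)$ than elements of $W$, so some completion must avoid $W$). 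Symmetrically, if instead $U$ itself happens to be a block, or $U \subseteq$ some configuration forcing the above to fail, one uses the roles of $U$ and $W$ interchanged, picking $p' \in W \setminus U$. The case where one of $U,W$ is already a block of the system is immediate since then that vertex lies in the resolving set.

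The main obstacle is the bad case $x \in W$: it is the one point where the bare counting $n \geq 3k-1$ (diameter~2) is not enough, and the hypothesis $n \geq 4k-2$ is exactly what guarantees enough freedom in choosing the $(k-2)$-subset of $[n]\setminus(U\cup W)$ — there are at least $\binom{2k-1}{k-2}$ choices, comfortably more than $|W| = k$ — so that a completion avoiding $W$ must exist. Making this counting precise, and checking that distinct $(k-2)$-subsets through $p$ really do give the control we want via the $\lambda=1$ property, is the delicate part; everything else is routine.
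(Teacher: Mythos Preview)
Your overall strategy---pick $p\in U\setminus W$ and look for a block through $p$ disjoint from $W$---is exactly the paper's approach. The gap is in your counting step. You assert that ``$\lambda=1$ forces distinct $(k-1)$-sets through $p$ to have distinct completions,'' but this is false: a single block $B$ containing $p$ contains $k-1$ different $(k-1)$-subsets through $p$, all with completion $B$. More to the point, even restricting to $(k-2)$-sets $T\subseteq[n]\setminus(U\cup W)$, many different bad $T$'s can yield the \emph{same} extra point $x_T\in W$: for a fixed $w\in W$ there are $\lambda_2=\binom{n-2}{k-3}/(k-2)$ blocks through $\{p,w\}$, not just one. So comparing the number of available $T$'s to $|W|=k$ proves nothing. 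Concretely, for $k=4$ and $n=4k-2=14$ one has at most $\binom{6}{2}=15$ choices of $T$ but up to $4\cdot\binom{12}{1}/2=24$ blocks through $p$ meeting $W$, so your inequality goes the wrong way. (There is also a minor slip: $|U\cup W|$ can be $2k$, not $2k-1$, when $U\cap W=\emptyset$.)

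The paper repairs this by freezing more of the $(k-1)$-set and varying less. Fix a $(k-3)$-subset $Y\subseteq[n]\setminus(U\cup W)$ once and for all, and then vary a single further point $x_i\in[n]\setminus(U\cup W\cup Y)$; the hypothesis $n\geq 4k-2$ gives at least $k+1$ choices of $x_i$. Now the completions of the $(k-1)$-sets $\{p\}\cup Y\cup\{x_i\}$ add extra points $b_i$, and here $\lambda=1$ really does force $b_i\neq b_j$ for $i\neq j$ (otherwise the $(k-1)$-set $\{p\}\cup Y\cup\{b_i\}$ would lie in two blocks). With $k+1$ distinct $b_i$'s and $|W|=k$, pigeonhole gives some $b_j\notin W$, and the corresponding block is disjoint from $W$ but contains $p\in U$. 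Your sketch is one step away from this; the missing idea is that distinctness of the extra points requires holding \emph{all but one} element of the $(k-1)$-set fixed.
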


\begin{proof}
Let $\mathcal{S}$ be a Steiner system $S(k-1,k,n)$.  Suppose $U$ and $W$ are two distinct vertices of $K(n,k)$, and let $a\in U\setminus W$.
We can assume that $U$, $W$ are not blocks of~$\mathcal{S}$.  Now, one can choose a set of $k-3$ points $Y=\{y_1,\ldots, y_{k-3}\}$ disjoint from $U\cup W$, and then a further $k+1$ points $x_1,\ldots ,x_{k+1}\in [n]\setminus (U\cup W \cup Y)$.  For each $x_i$, form the completions of the $(k-1)$-subsets 
$\{a\}\cup \{y_1,\ldots, y_{k-3}\} \cup \{x_i\}$: these are blocks of $\mathcal{S}$ formed by including an additional point $b_i$.  Since $\mathcal{S}$ is a Steiner system, it follows that each of the elements $b_1,\ldots,b_{k+1}$ are distinct (otherwise, $\{a\}\cup \{y_1,\ldots, y_{k-3}\} \cup \{b_i\}$ would be a subset of more than one block).  By the pigeonhole principle, at least one of these elements $b_j$ is not in $W$.  Consequently, the block $X= \{a\}\cup \{y_1,\ldots, y_{k-3}\} \cup \{x_j\} \cup \{b_j\}$ is disjoint from $W$ but not $U$, and so $d(X,U)=1$ while $d(X,W)\neq 1$.  Hence $X$ resolves $U$ and $W$.

The bound follows from evaluating Equation~\ref{blocksnumber} in the case where $\lambda=1$ and $t=k-1$.
\end{proof}

In particular, in the special case of Steiner triple systems, our result has the following form.

\begin{cor}
\label{sts}
Let $n$ be an integer such that $n\equiv 1,3 \pmod 6$ and $n\geq 13$, and let $\mathcal{S}$ be a Steiner triple system of order $n$. Then the blocks of $\mathcal{S}$ form a resolving set for $K(n,3)$.  Consequently, $\beta(K(n,3))\leq n(n-1)/6$.
\end{cor}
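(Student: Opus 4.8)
The plan is to derive this corollary directly from Theorem~\ref{thm:Steiner systems} by specialising to $k=3$. First I would note that a \emph{Steiner triple system} of order $n$ is, by definition, precisely a Steiner system $S(2,3,n) = S(k-1,k,n)$ with $k=3$; hence the hypothesis ``there exists a Steiner system $S(k-1,k,n)$'' of Theorem~\ref{thm:Steiner systems} is, in this case, exactly the statement that an $\STS(n)$ exists. By Kirkman's theorem~\cite{Kirkman} (as recalled earlier in this subsection), an $\STS(n)$ exists for every admissible order, i.e.\ whenever $n \equiv 1,3 \pmod 6$ and $n \geq 7$; in particular it exists for all $n \geq 13$ in these residue classes.

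Next I would verify the remaining numerical hypothesis of Theorem~\ref{thm:Steiner systems}, namely $n \geq 4k-2$. With $k=3$ this reads $n \geq 10$, which is implied by $n \geq 13$. (This also explains why the corollary is stated for $n \geq 13$ rather than for all admissible $n$: the admissible values $n=7$ and $n=9$ both fail the bound $n \geq 10$, so the first order for which the argument applies is $n=13$.) Thus all the hypotheses of Theorem~\ref{thm:Steiner systems} hold, and we conclude immediately that the blocks of $\mathcal{S}$ form a resolving set for $K(n,3)$.

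Finally, for the numerical bound I would simply evaluate the quantity $\frac{1}{k}\binom{n}{k-1}$ from Theorem~\ref{thm:Steiner systems} at $k=3$, obtaining
\[
\frac{1}{3}\binom{n}{2} = \frac{n(n-1)}{6},
\]
which is exactly the number of blocks in an $\STS(n)$ (as recorded earlier in the subsection), so that $\beta(K(n,3)) \leq n(n-1)/6$.

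I do not expect any genuine obstacle here: all the real work is carried out in Theorem~\ref{thm:Steiner systems}, and the corollary is purely a matter of substituting $k=3$, identifying $S(2,3,n)$ with an $\STS(n)$, invoking Kirkman's existence theorem, and checking that the stated range $n \geq 13$ is compatible with the bound $n \geq 4k-2$. The only point requiring a moment's care is this last compatibility check, which pins down precisely why the hypothesis $n \geq 13$ (rather than, say, $n \geq 7$) is the right one.
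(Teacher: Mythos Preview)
Your proposal is correct and matches the paper's approach exactly: the paper presents this corollary without proof, as an immediate specialization of Theorem~\ref{thm:Steiner systems} to $k=3$, and your verification of the hypotheses (including the check $n\geq 4k-2=10$, explaining the threshold $n\geq 13$) is precisely what is needed. The appeal to Kirkman's theorem is not strictly required, since the corollary hypothesizes that an $\STS(n)$ is given, but it does no harm.
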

%
We remark that this result does not include the Steiner triple systems of orders~7 and 9.  However, the unique $\STS(7)$ is the Fano plane, which we know from the previous subsection to be a resolving set for $K(7,3)$.  Also, the unique $\STS(9)$ is an affine plane, which we know from Corollary~\ref{affine} to be a resolving set for $K(9,3)$.


\subsection{Toroidal grids}
\label{sec:toroidal grids}

In this subsection, we obtain resolving sets for Kneser graphs $K(n,4), K(n,5), K(n,6)$, provided $n$ is sufficiently large, by using a construction in toroidal grids.  Although this construction does not apply directly to $K(n,k)$ where $k\geq 7$, we suspect that similar ideas could be developed to cover other values of the parameter $k$.

A \emph{toroidal grid} is the graph $H=C_a\square C_b$ with $n=ab$ vertices obtained as the cartesian product of two cycles, $C_a$ and $C_b$, with $a$ and $b$ vertices respectively.
A \emph{straight path} in $H$ is a set of vertices such that all of them share the first coordinate, or all of them share the second coordinate. If $x$ is a vertex of $H$ and $k\geq 1$, there are four straight paths with $k$ vertices having $x$ as an end-point: we will denote these as \emph{$(x,k)$-paths}. Fixing a cyclic ordering of the vertices of the cycles $C_a$ and $C_b$, we can say that an $(x,k)$-path in $H$ goes \emph{right} if the first coordinates of its vertices, beginning on vertex $x$, increase (thus the second coordinates are equal, by definition of straight path). Analogously, the path goes \emph{up} if the second coordinates, beginning on $x$, increase.  In a similar manner, we can describe $(x,k)$-paths going \emph{down} or \emph{left}.  Note that the total number of straight paths in $C_a \square C_b$ is $2ab$.

\begin{thm}
\label{4}
Let $H$ be the toroidal grid $C_a \square C_b$ with $a,b\geq 10$. If $n=ab$, the set of all straight paths in $H$ with 4 vertices is a resolving set for $K(n,4)$.  Therefore, for such values of $n$, we have $\beta(K(n,4))\leq 2n$.
\end{thm}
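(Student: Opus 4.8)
The plan is to exploit the fact that, since $n=ab\ge 100\ge 3\cdot 4-1$, the Kneser graph $K(n,4)$ has diameter $2$: in $K(n,4)$ the distance between two vertices $A,B$ equals $0$ if $A=B$, equals $1$ if $A\cap B=\emptyset$, and equals $2$ otherwise. Hence a straight $4$-path $X$ resolves a pair $U,W$ exactly when $X$ equals one of $U,W$ (distance $0$ versus something positive) or when $X$ is disjoint from precisely one of $U,W$ (distance $1$ versus $2$). In particular, if $U$ (say) is itself a straight $4$-path then it lies in our set $\mathcal{S}$ and resolves the pair, so from now on I assume neither $U$ nor $W$ is a straight $4$-path; then it suffices to produce a straight $4$-path through some point of $U\setminus W$ disjoint from $W$ (or, symmetrically, through a point of $W\setminus U$ disjoint from $U$).

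Identify $V(H)$ with $\{0,\dots,a-1\}\times\{0,\dots,b-1\}$ and call the straight paths running in the first (resp.\ second) coordinate direction \emph{horizontal} (resp.\ \emph{vertical}). The first step is a local lemma: for a $4$-subset $S$ and a point $p\notin S$, there is a straight $4$-path through $p$ avoiding $S$ \emph{unless} $S$ consists of exactly two points on the row through $p$ together with exactly two points on the column through $p$ (with the two pairs suitably placed). There are exactly eight straight $4$-paths through $p$ — four horizontal, four vertical — since $a,b\ge 10$; the horizontal path having $p$ as its leftmost vertex can be blocked only by a point of $S$ lying to the right of $p$ (within distance $3$), and the one having $p$ as rightmost only by a point to the left, so blocking all four horizontal paths forces $S$ to contain at least two points of the row through $p$, and symmetrically at least two of the column. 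As $|S|=4$, this forces the stated $2{+}2$ splitting; moreover the row carrying two points of $S$ and the column carrying two points of $S$ are then unique, so the "bad point'' $p$ is their unique intersection. In particular \emph{every $4$-set has at most one bad point}.

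Now I combine these observations. If some point of $U\setminus W$ is not the bad point of $W$, then a straight $4$-path through that point avoiding $W$ resolves $U$ and $W$: it meets $U$, is disjoint from $W$, and is equal to neither $U$ nor $W$ (disjointness rules out $W$, and our standing assumption rules out $U$), so the two distances are $2$ and $1$. Symmetrically for $W\setminus U$. Since each $4$-set has at most one bad point, the only way both of these can fail is that $U\setminus W=\{p\}$ with $p$ bad for $W$ \emph{and} $W\setminus U=\{q\}$ with $q$ bad for $U$. I claim this cannot happen. Indeed, $p$ being bad for $W$ means $W$ is the union of two points on the row $R$ through $p$ and two points on the column $C$ through $p$; since $q\in W$, after possibly interchanging the two coordinate directions we may assume $q\in R$. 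Then $U=(W\setminus\{q\})\cup\{p\}$ lies entirely in $R\cup C$, with exactly two of its points ($p$ and the surviving $R$-point of $W$) on $R$. But for $q$ to be bad for $U$, the set $U$ would have to contain two points on the column through $q$; that column differs from $C$ because $q\in R$ and $q\neq p$, and the only vertex of $R\cup C$ lying on it is $q$ itself, which is not in $U$ — a contradiction. Hence some straight $4$-path always resolves $U,W$, and since $|\mathcal{S}|\le 2n$ the bound $\beta(K(n,4))\le 2n$ follows.

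The main obstacle is the local lemma together with the precise description of the "bad point'' configuration, including the fact that there is at most one bad point per $4$-set: the contradiction in the last paragraph rests entirely on the rigid $2{+}2$ structure that being bad forces on $W$, so the count of blocked straight $4$-paths on the torus must be carried out carefully, keeping track of the cyclic adjacencies (this is where the hypothesis $a,b\ge 10$ is comfortably more than enough).
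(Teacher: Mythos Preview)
Your proof is correct, and it follows a genuinely different route from the paper's argument.  The paper works only with the four $(x,4)$-paths having $x$ as an \emph{endpoint}: starting from any $x\in U\setminus W$, if all four are blocked by $W$ then $W$ is completely pinned down as $\{y_1,y_2,y_3,y_4\}$; repeating from a suitable $y_1\in W\setminus U$ pins down $U$, and then an explicit $(y_2,4)$-path going left is exhibited that meets $W$ but misses $U$.  So the paper's proof is a short two-step chase ending in a concrete resolving path.

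You instead consider all \emph{eight} straight $4$-paths through a point and package the obstruction into a clean structural lemma: a point $p\notin S$ can be ``bad'' for a $4$-set $S$ only if $S$ splits $2{+}2$ along the row and column of $p$, which forces the bad point to be unique.  This immediately reduces the unresolved case to $|U\triangle W|=2$, and the rigid $2{+}2$ description of $W$ around $p$ then makes the required contradiction with $q$ being bad for $U$ transparent.  Your approach is a bit more conceptual and would likely scale or adapt more smoothly (the ``at most one bad point'' lemma does the real work), whereas the paper's endpoint-path argument is shorter and more pictorial but more ad hoc.  Both arguments need only $a,b\ge 7$ in practice; the hypothesis $a,b\ge 10$ is comfortably sufficient in either case.
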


\begin{proof}
We identify the set $[n]$ with the $ab$ vertices of $H$, so a vertex of $K(n,4)$ is simply a 4-subset of $V(H)$.  Consider two such subsets $U,W$ with $U\neq W$.  We will show that there exists a straight path in $H$ that meets just one of them.

Since $U\neq W$, there exists $x$ such that $x \in U\setminus W$.  Denote by $p_1,p_2,p_3,p_4$ the $(x,4)$-paths which go right, up, left and down respectively.  If there exists $p_i$ such that $p_i\cap W=\emptyset$, we are done, so suppose that $p_i \cap W\neq \emptyset$ for $i=1,2,3,4$.  Note that $p_i\cap p_j=x\ (i\neq j)$, so it is clear that $p_i \cap W=\{y_i\}$ with $y_i\neq y_j\ (i\neq j)$ and therefore that the set $W$ must be $W=\{y_1,y_2,y_3,y_4\}$ (see Figure~\ref{caso1:subfig1}).

\begin{figure}[!ht]
\centering
\subfigure[Vertices of $W$ (in black) and the $(x,4)$-paths.]{
   \includegraphics[width=0.25\textwidth] {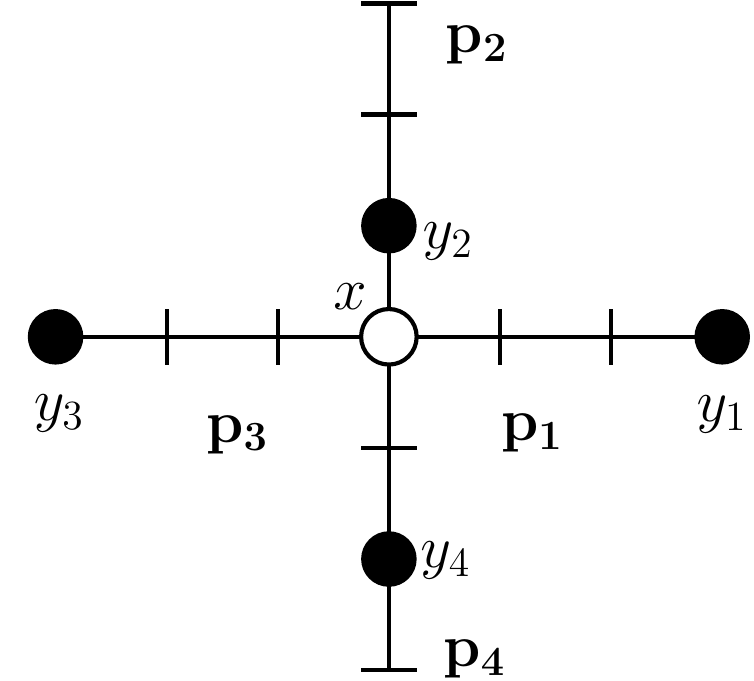}
   \label{caso1:subfig1}}\hspace{1cm}
   \subfigure[ The white vertices are in $U$ and black vertices are in $W$. The $y_2$-path meets $W$ but does not meet $U$.]{
   \includegraphics[width=0.37\textwidth] {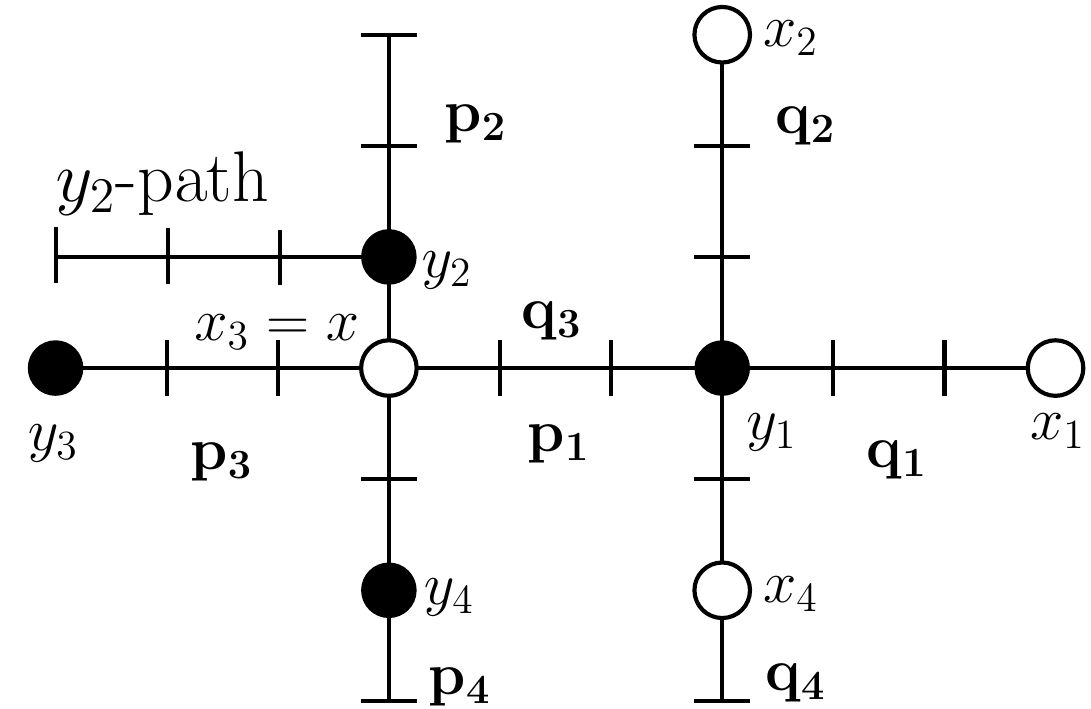}
   \label{caso1:subfig2}}
\label{caso1}
\caption{Construction for $K(n,4)$.}
\end{figure}

Assume, without lost of generality, that $y _1\notin U$.  Denote the $(y_1,4)$-paths as $q_1,q_2,q_3,q_4$, going right, up, left and down respectively. Note that $x$ belongs to $q_3$. If there exists $q_i$ such that $q_i\cap U=\emptyset$, we are done, so suppose that $q_i\cap U\neq \emptyset$ for $i=1,2,3,4$. Again, $q_i \cap U=\{x_i\}$ with $x_i\neq x_j\ (i\neq j)$ (note that $a,b\geq 10$), and $U=\{x_1,x_2,x=x_3,x_4\}$ (see Figure~\ref{caso1:subfig2}).

Now we observe that $y_2\notin U$ and the $(y_2,4)$-path going left meets $W$ but it does not meet $U$ (see Figure~\ref{caso1:subfig2}).  Hence this path has the desired property.
\end{proof}

In a similar way, given any pair of distinct 5-subsets (or of 6-subsets) of $V(H)$, we can find a straight path with 5 vertices (or with 6 vertices, respectively) that meets just one of them, provided that the toroidal grid $H$ is large enough.  So we obtain the following results about the metric dimension of $K(n,5)$ and $K(n,6)$.

\begin{thm}
\label{5}
Let $H$ be the toroidal grid $C_a \square C_b$ with $a,b\geq 13$.  If $n=ab$, the set of all straight paths in $H$ with 5 vertices is a resolving set for $K(n,5)$.  Therefore, for such values of $n$, we have $\beta(K(n,5))\leq 2n$.
\end{thm}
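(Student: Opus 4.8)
The plan is to mimic the argument used for Theorem~\ref{4} almost verbatim, the only change being the bookkeeping needed to handle 5-subsets instead of 4-subsets, together with a slightly larger hypothesis on $a$ and $b$ to guarantee that the straight paths we use behave as in the Euclidean grid (no ``wrap-around'' collisions). First I would identify $[n]$ with the $ab$ vertices of $H = C_a \square C_b$, so that a vertex of $K(n,5)$ is a 5-subset of $V(H)$. Given distinct 5-subsets $U, W$, I would pick a vertex $x \in U \setminus W$ and consider the four $(x,5)$-paths $p_1, p_2, p_3, p_4$ going right, up, left and down. If some $p_i$ misses $W$ entirely we are done immediately (it meets $U$ at $x$ but not $W$, so $d(p_i,U)=1 \neq d(p_i,W)$), so we may assume each $p_i$ meets $W$.

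The key structural observation is that, since $a, b \geq 13$ and each path has only $5$ vertices, any two of the four $(x,5)$-paths meet only in $x$; hence $|p_i \cap W| \le |W \setminus \{x\}| = 5$ distributed over four paths, and in fact $p_i \cap W$ contributes at least one new point of $W$ for each $i$. Because $|W| = 5$ and $x \notin W$, the four intersections $p_i \cap W$ (for $i=1,\dots,4$) can cover at most the five points of $W$; in particular at least three of them are singletons $\{y_i\}$ with the $y_i$ pairwise distinct, and one path may pick up the remaining point of $W$. I would then pass to one of the $y_i$ with $y_i \notin U$ — such a point exists by a counting argument analogous to the $k=4$ case — and repeat the construction with the four $(y_i,5)$-paths $q_1,\dots,q_4$, forcing $U$ to lie along these paths if none of them already resolves. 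At that stage $U$ and $W$ are both pinned down to explicit ``cross'' configurations centred at $x$ and $y_i$ respectively, and the final step is to exhibit one more straight 5-path (emanating from another arm of one of the crosses, as in Figure~\ref{caso1:subfig2}) that meets exactly one of $U$ and $W$. The bound $\beta(K(n,5)) \le 2n$ then follows from the remark in the text that $C_a \square C_b$ has exactly $2ab = 2n$ straight paths of any fixed length.

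The main obstacle, and the reason the hypothesis jumps from $a,b\ge 10$ to $a,b\ge 13$, is the combinatorial case analysis: with 5-subsets there are more ways the intersections $p_i\cap W$ can be arranged (in the $k=4$ case the pigeonhole argument forced each $p_i\cap W$ to be a singleton and $W$ to be exactly $\{y_1,y_2,y_3,y_4\}$, whereas now one arm of the cross can absorb a fifth point), so one must carefully enumerate which configurations of $U$ and $W$ survive all the ``easy'' escapes and check that in every surviving configuration the final resolving path exists. The condition $a,b \ge 13$ is what ensures that paths of length $5$ started from vertices at distance up to $4$ along a common line still do not wrap around and collide, so that the cross configurations really are forced and the geometric picture of Figure~\ref{caso1:subfig2} is valid; verifying this non-collision condition in each subcase is the routine-but-delicate part. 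I would not grind through every subcase here, but would note that each is dispatched exactly as in the proof of Theorem~\ref{4}: locate an arm of one cross that avoids the other set, and take the straight 5-path along it.
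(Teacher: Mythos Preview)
Your proposal is correct and follows exactly the approach the paper intends: the paper itself offers no detailed proof of Theorem~\ref{5}, merely remarking that the argument of Theorem~\ref{4} goes through ``in a similar way'' for 5-subsets (and 6-subsets) provided the grid is large enough, so your sketch is in fact more detailed than the paper's own treatment. One small slip to fix: when $p_i$ meets $U$ at $x$ but misses $W$, it is $d(p_i,W)=1$ (they are disjoint, hence adjacent in the Kneser graph) and $d(p_i,U)\neq 1$, not the other way round---though of course this does not affect the resolving conclusion.
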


\begin{thm}
\label{6}
Let $H$ be the toroidal grid $C_a \square C_b$ with $a,b\geq 16$.  If $n=ab$, the set of all straight paths in $H$ with 6 vertices is a resolving set for $K(n,6)$.  Therefore, for such values of $n$, we have $\beta(K(n,6))\leq 2n$.
\end{thm}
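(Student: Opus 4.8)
The plan is to mirror the proofs of Theorems~\ref{4} and~\ref{5}, adapting the argument from 4-subsets to 6-subsets. First I identify $[n]$ with the vertex set $V(H)$ of the toroidal grid $H=C_a\square C_b$, so that a vertex of $K(n,6)$ is just a $6$-subset of $V(H)$. Since $a,b\geq 16$ we have $n=ab\geq 256 > 3\cdot 6-1$, so $K(n,6)$ has diameter~$2$; hence, exactly as in the proof of Theorem~\ref{4}, for any two distinct vertices $U,W$ it suffices to exhibit a straight $6$-path $X$ in $H$ that meets exactly one of $U$ and $W$, because then $d(X,U)\neq d(X,W)$ and $X$ resolves the pair.

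Fix distinct $6$-subsets $U,W$ and choose $x\in U\setminus W$. The first step is the ``escape from $W$'' round: consider the four $(x,6)$-paths $p_1,p_2,p_3,p_4$ going right, up, left and down. If some $p_i$ is disjoint from $W$, then (as $x\in p_i\cap U$) that path meets exactly one of $U$ and $W$ and we are done; otherwise each $p_i$ meets $W$, and since the $p_i$ pairwise intersect only in $x\notin W$, the point $y_i$ of $W\cap p_i$ nearest to $x$ yields four distinct points $y_1,\dots,y_4$ of $W$, each at distance at most $5$ from $x$ along the corresponding axis. The second step is the ``escape from $U$'' round: if some $y_i\notin U$, say $y_1\notin U$, we examine the four $(y_1,6)$-paths $q_1,\dots,q_4$; one of them runs back through $x$, and because $a,b\geq 16$ none of them wraps around, so they pairwise meet only in $y_1$. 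If some $q_i$ misses $U$ we are done; otherwise $U$ must contain a point on each $q_i$, and --- again using that the relevant configuration spans at most $2(6-1)+(6-1)=15<\min\{a,b\}$ cells, so there is no wraparound --- one produces an explicit straight $6$-path through one of the remaining points $y_j\in W$ that avoids $U$, in direct analogy with Figure~\ref{caso1:subfig2}.

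I expect the main obstacle to be that $|U|=|W|=6$ exceeds the number $4$ of axis directions, so neither escape round pins $U$ or $W$ down completely: after the first round $W$ need not equal $\{y_1,\dots,y_4\}$, after the second round $U$ carries two further ``free'' points beyond those forced by the $q_i$, and --- worst of all --- it can happen that \emph{every} $y_i$ lies in $U$, so that $\{x,y_1,y_2,y_3,y_4\}\subseteq U$ and $\{y_1,y_2,y_3,y_4\}\subseteq W$ and the ``$y_1\notin U$'' branch above is simply unavailable. The plan for these cases is to work with the free points directly: a point of $W\setminus U$ (one exists, and no $y_i$ is such a point) or a point of $U\setminus W$ other than $x$, and to thread through it a straight $6$-path --- not necessarily having that point as an endpoint, which is permitted since the resolving set consists of \emph{all} $2n$ straight $6$-paths --- that avoids the six points of the opposite set; here $a,b\geq 16$ is exactly the room needed to guarantee such a path exists on the relevant row or column. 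Completing this bookkeeping, together with the symmetric sub-cases arising from which of the $y_i$ lie in $U$, finishes the argument; since $|\mathcal{S}|=2ab=2n$, we conclude $\beta(K(n,6))\leq 2n$.
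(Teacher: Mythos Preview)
Your approach is the same as the paper's: the paper does not give a separate proof of Theorem~\ref{6} at all, but simply asserts that ``in a similar way'' to Theorem~\ref{4} one can always find a straight $6$-path meeting exactly one of $U,W$, provided the grid is large enough.  So at the level of strategy there is nothing to compare --- you are doing exactly what the paper intends, and in fact you go further than the paper by flagging the genuine difficulty that $k=6$ exceeds the number of axis directions, so the clean ``$W=\{y_1,y_2,y_3,y_4\}$'' step from the $k=4$ argument no longer pins down $W$ (or $U$) completely.

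That said, your proposal is explicitly a plan rather than a proof, and the part you defer (``completing this bookkeeping'') is where the actual work lies.  In particular, your blanket claim that for any $z\in W\setminus U$ one can thread a straight $6$-path through $z$ avoiding the six points of $U$, ``because $a,b\geq 16$ gives exactly the room needed,'' is not true without using the structure you have already built up: on a single row there are only six $6$-paths through $z$, and two well-placed points of $U$ (one on each side of $z$) can block all of them, so four points of $U$ can in principle block every axis-aligned $6$-path through $z$.  What saves you is not the size of the grid but the fact that in the problematic branch the points $x,y_1,\dots,y_4$ of $U$ are confined to the row and column of $x$, so at most a couple of them can lie near the row or column of a free point $z$; this constraint, together with the location of the remaining free point of $U$, is what must be exploited in the case analysis.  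The paper suppresses all of this, so you are not missing anything relative to the published argument, but if you want a self-contained proof you will need to carry out that case analysis rather than assert it.
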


Unfortunately, this construction using straight paths in a toroidal grid does not work for $K(n,k)$ with $k\geq 7$. In these cases, there are vertices in the Kneser graph that cannot be resolved using such paths (see Figure~\ref{caso4}).

\begin{figure}[!hp]
\centering
\includegraphics[width=0.2\textwidth]{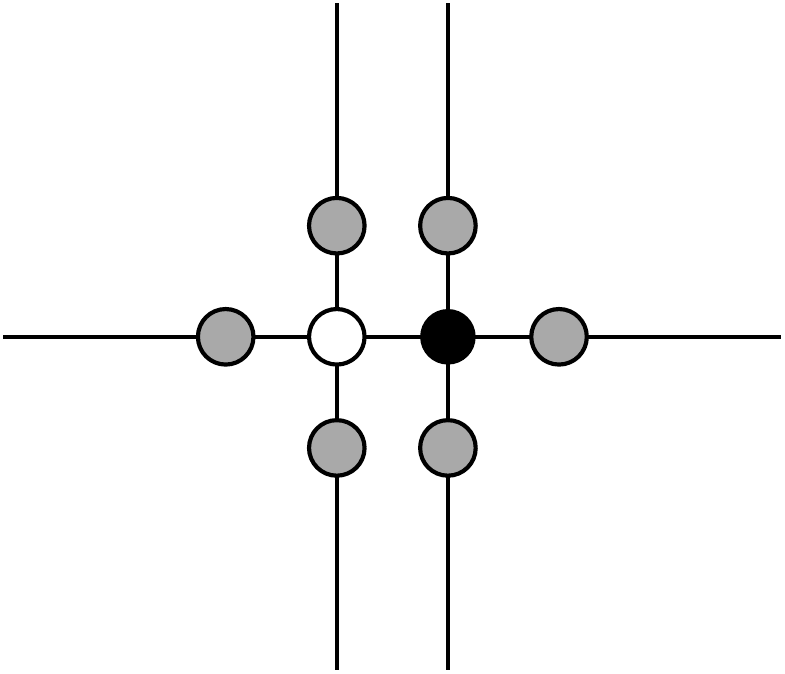}
\caption{The white vertex is in $U$, black vertices are in $W$ and grey vertices are in both of them. Vertices $U,W\in V(K(n,7))$ cannot be resolved using straight 7-paths.}
\label{caso4}
\end{figure}

\section{Final remarks}
\label{sec:final}

In this paper, our emphasis has been on finding constructions of resolving sets for Johnson and Kneser graphs, using various algebraic, combinatorial and geometric approaches.  Nevertheless, these constructions provide bounds on the the metric dimension of $J(n,k)$ and $K(n,k)$.  We summarize these bounds in Table~\ref{summarytable}.  In the table, $q$ denotes a prime power.
\renewcommand{\arraystretch}{1.4}
\begin{table}[p]
\centering
{\small
\begin{tabular}{|c|c|c|}
\hline
{\bf The metric dimension of ...} & {\bf using ...} & {\bf is bounded by ...} \\
\hline
$J(n,2)$, $K(n,2)$ & \multirow{2}*{\cite[Corollary 3.33]{bsmd}} & \multirow{2}*{${\frac{2}{3}}(n-i)+i$} \\
where $n\equiv i \pmod 3$ & & \\
\hline
$J(n,k)$ & \multirow{4}*{partitioning $[n]$} & $\lfloor k(n+1)/(k+1) \rfloor$ \\
\cline{1-1}\cline{3-3}
$K(2k+1,k)=O_{k+1}$  &                       & $2k$ \\
\cline{1-1}\cline{3-3}
$K(n,k)$ &                                   & $\lceil \frac{n}{2k-1} \rceil (\binom{2k-1}{k}-1)$ \\
\cline{1-1}\cline{3-3}
$K(n,k)$, diameter~3 &                       & $2\binom{n-k}{k}$ \\
\hline
\multirow{3}*{$J(n,k)$} & $k$-set system whose & \multirow{3}*{$n$} \\
                        & incidence matrix has rank $n$  & \\
\cline{2-2}
                        & $(n,k,\lambda)$ symmetric design & \\                        
\hline
$J(q^2+q+1,q+1)$ & projective plane of order $q$ & $q^2+q+1$ \\
\hline 
$J(4m-1,2m-1)$,       & \multirow{2}*{Hadamard design} & \multirow{2}*{$4m-1$} \\
$K(4m-1,2m-1)=O_{2m}$ & & \\
\hline
$K(n,3)$ & Steiner triple system $\STS(n)$ & $n(n-1)/6$ \\
\hline
$K(n,k)$ & Steiner system $S(k-1,k,n)$ & $\binom{n}{k-1}/k$ \\
\hline
$K(v,s+1)$,                 & \multirow{2}*{partial geometry $pg(s,t,\alpha)$} & \multirow{2}*{$(t+1)(st+\alpha)/\alpha$} \\
$v=(s+1)(st+\alpha)/\alpha$ & & \\
\hline
$K(q^2,q)$ & affine plane of order $q$ & $q(q+1)$ \\
\hline
$K(q^3,q)$ & \multirow{3}*{generalized quadrangle} & $q^2(q+2)$\\
\cline{1-1}\cline{3-3}
$K((q+1)(q^3+1),q+1)$  & & $(q^2+1)(q^3+1)$ \\
\cline{1-1}\cline{3-3}
$K((q^2+1)(q^5+1),q^2+1)$ & & $(q^3+1)(q^5+1)$\\
\hline
$K(n,4)$ & \multirow{3}*{toroidal grid $C_a\Box C_b$} & \multirow{3}*{$2ab=2n$}\\
\cline{1-1}
$K(n,5)$ &     &  \\
\cline{1-1}
$K(n,6)$ &   &  \\
\hline
\end{tabular}}
\caption{A summary of bounds on the metric dimension of $J(n,k)$ and $K(n,k)$} \label{summarytable}
\end{table}

Note that many of the bounds in Table~\ref{summarytable} are conditioned on the existence of certain objects, or require parameters to be sufficiently large.  However, we expect that these bounds hold more widely.  In particular, we conjecture that for the Kneser graph $K(n,k)$ there is a bound of $\beta(K(n,k))=\mathrm{O}(n)$ independent of $k$.

Also, the question of determining the exact values of $\beta(J(n,k))$ and $\beta(K(n,k))$ remains open for $k>2$.  This is likely to be quite challenging in general.  As part of our investigations, we conducted some computer searches using the \textsf{GAP} system~\cite{GAP}.  One pattern that emerged was that, for the Johnson graph $J(2k,k)$, the metric dimension appeared to equal $k+1$: we also conjecture that this is the exact value.

\newpage

\end{document}